\newtheorem{theorem}{Theorem}
\newtheorem{lemma}{Lemma}
\newtheorem{claim}{Claim}
\title{{\bf Critical $(P_5,dart)$-Free Graphs}}
\author[a,b]{Wen Xia}
\author[c]{Jorik Jooken}
\author[c,d]{Jan Goedgebeur}
\author[,a,b]{Shenwei Huang\thanks{Email: shenweihuang@nankai.edu.cn.}}
\affil[a]{College of Computer Science, Nankai University, Tianjin 300071, China}
\affil[b]{Tianjin Key Laboratory of Network and Data Security Technology, Nankai University, Tianjin 300071, China}
\affil[c]{Department of Computer Science, KU Leuven Campus Kulak-Kortrijk, 8500 Kortrijk, Belgium}
\affil[d]{Department of Applied Mathematics, Computer Science and Statistics, Ghent University, 9000 Ghent, Belgium}
\date{September 2, 2023}
\begin{document}

\maketitle

\begin{abstract}
Given two graphs $H_1$ and $H_2$, a graph is $(H_1,H_2)$-free if it contains
no induced subgraph isomorphic to $H_1$ nor $H_2$. A dart is the graph obtained from a diamond by adding a new vertex and making
it adjacent to exactly one vertex with degree 3 in the diamond.

In this paper, we show that there are finitely many $k$-vertex-critical
$(P_5,dart)$-free graphs for $k \ge 1$.
To prove these results, we use induction
on $k$ and perform a careful structural analysis via Strong Perfect Graph Theorem combined with the pigeonhole principle based on the properties of vertex-critical graphs.
Moreover, for $k \in \{5, 6, 7\}$ we characterize all $k$-vertex-critical $(P_5,dart)$-free graphs using a computer generation algorithm.
Our results imply the existence of a polynomial-time certifying algorithm to decide the $k$-colorability of
$(P_5,dart)$-free graphs for $k \ge 1$ where the certificate is either a $k$-coloring or a $(k+1)$-vertex-critical induced subgraph.

{\bf Keywords.} Graph coloring; $k$-critical graphs; Strong perfect graph theorem; Polynomial-time algorithms.

\end{abstract}

\section{Introduction}

All graphs in this paper are finite and simple.
A \emph{$k$-coloring} of a graph $G$ is a function $\phi:V(G)\longrightarrow \{ 1, \ldots ,k\}$ such that
$\phi(u)\neq \phi(v)$ whenever $uv \in E(G)$. Equivalently, a $k$-coloring of $G$ can be viewed
as a partition of $V(G)$ into $k$ stable sets. If a $k$-coloring exists, we say that $G$ is {\em $k$-colorable}.
The \emph{chromatic number} of $G$, denoted by
$\chi (G)$, is the minimum number $k$ such that $G$ is $k$-colorable.
A graph $G$ is {\em $k$-chromatic} if $\chi(G)=k$. A graph $G$ is {\em $k$-critical} if it is
$k$-chromatic and $\chi(G-e)<\chi(G)$ for any edge $e\in E(G)$.  For instance, $K_2$ is the
only 2-critical graph and odd cycles are the only 3-critical graphs.
A graph is {\em critical} if it is $k$-critical for some integer $k\ge 1$.
Vertex-criticality is a weaker notion. A graph $G$ is $k$-vertex-critical if $\chi(G) = k$ and $\chi(G - v) < k$ for any $v \in V(G)$.

 For a fixed $k\ge 3$, it has long been known that determining the $k$-colorability
 of a general graph is an NP-complete problem~\cite{K72}.
 However, the situation changes if one restricts the structure of
 the graphs under consideration.

Let $\mathcal{H}$ be a set of graphs. A graph $G$ is $\mathcal{H}$-free if
it does not contain any member in $\mathcal{H}$ as an induced subgraph.
When $\mathcal{H}$ consists of a single graph $H$ or two graphs $H_1$ and
$H_2$, we write $H$-free and $(H_1,H_2)$-free instead of $\{H\}$-free and $\{H_1,H_2\}$-free, respectively.
We say that $G$ is $k$-vertex-critical $\mathcal{H}$-free if it is $k$-vertex-critical
and $\mathcal{H}$-free. In this paper, we study $k$-vertex-critical $\mathcal{H}$ -free graphs. The following problem arouses our interest: Given a set $\mathcal{H}$
of graphs and an integer $k \geq 1$, are there finitely many $k$-vertex-critical $\mathcal{H}$-free graphs? This question is very important because
the finiteness of the set has a fundamental algorithmic implication.

	\begin{theorem}[Folklore]\label{Folklore}
		If the set of all $k$-vertex-critical $\mathcal{H}$-free graphs is finite, then there is a polynomial-time algorithm to determine whether an $\mathcal{H}$-free graph is $(k-1)$-colorable.  \qed
	\end{theorem}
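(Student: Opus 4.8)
The plan is to observe that, once the family of obstructions is finite, $(k-1)$-colorability becomes a bounded induced-subgraph search. Write $\mathcal{C}$ for the set of all $k$-vertex-critical $\mathcal{H}$-free graphs, which is finite by hypothesis, and let $c=\max_{H\in\mathcal{C}}|V(H)|$, with the convention $c=0$ if $\mathcal{C}=\emptyset$. The heart of the argument is the equivalence: \emph{an $\mathcal{H}$-free graph $G$ is $(k-1)$-colorable if and only if $G$ contains no induced subgraph isomorphic to a member of $\mathcal{C}$.}

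For the nontrivial direction of this equivalence I would pick a witness of non-colorability of minimum size. Assume $\chi(G)\ge k$, and among all induced subgraphs of $G$ with chromatic number at least $k$ let $H$ be one with as few vertices as possible. For every $v\in V(H)$, minimality gives $\chi(H-v)\le k-1$; since deleting a single vertex lowers the chromatic number by at most one, $\chi(H)\le\chi(H-v)+1\le k$, so in fact $\chi(H)=k$ and $H$ is $k$-vertex-critical. As being $\mathcal{H}$-free is hereditary under induced subgraphs, $H$ is also $\mathcal{H}$-free, hence $H\in\mathcal{C}$, so $G$ contains a member of $\mathcal{C}$ as an induced subgraph. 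The converse is immediate: if $G$ contains some $H\in\mathcal{C}$ as an induced subgraph, then $\chi(G)\ge\chi(H)=k$, so $G$ is not $(k-1)$-colorable.

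The algorithm is then brute force over $\mathcal{C}$. If $\mathcal{C}=\emptyset$, every $\mathcal{H}$-free graph is $(k-1)$-colorable and we answer ``yes''. Otherwise, on input an $n$-vertex $\mathcal{H}$-free graph $G$, enumerate every vertex subset $S\subseteq V(G)$ with $|S|\le c$ and every $H\in\mathcal{C}$, and test whether $G[S]\cong H$; output ``not $(k-1)$-colorable'' if some test succeeds and ``$(k-1)$-colorable'' otherwise. Correctness is exactly the equivalence above. For the running time, there are $O(n^{c})$ candidate subsets $S$, each is compared against the $|\mathcal{C}|$ fixed graphs in $\mathcal{C}$, and each isomorphism test between graphs on at most $c$ vertices costs $O(c!\,c^{2})$ time; since $\mathcal{C}$ and $c$ do not depend on $G$, the total running time is $O(n^{c})$, polynomial in $n$. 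As for the main obstacle, there is none of real substance here, which is why the statement is attributed to folklore; the one point needing a moment's care is verifying that the minimum-size non-$(k-1)$-colorable induced subgraph is genuinely $k$-\emph{vertex}-critical, i.e.\ that its chromatic number is exactly $k$, which is precisely the one-line ``chromatic number drops by at most one'' argument above.
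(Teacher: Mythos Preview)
The paper does not actually prove this statement: it is labeled ``Folklore'' and closed with a \qedsymbol\ immediately after the statement, so there is no proof to compare against. Your argument is correct and is the standard one; in fact the same brute-force containment check you describe is exactly what the paper later invokes in its proof of the certifying-algorithm theorem for $(P_5,\mathrm{dart})$-free graphs.
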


Let $K_n$ be the complete graph on $n$ vertices. Let $P_t$ and $C_t$ denote the path and the cycle
on $t$ vertices, respectively. The complement of $G$ is denoted by $\overline{G}$. For two graphs
$G$ and $H$, we use $G+H$ to denote the disjoint union of $G$ and $H$. For a positive integer, we use $rG$
to denote the disjoint union of $r$ copies of $G$. For $s,r \geq 1$, let $K_{r,s}$ be the complete bipartite
graph with one part of size $r$ and the other part of size $s$. Our research is mainly motivated by  the
following two theorems.

\begin{theorem}[\cite{HMRSV15}]
For any fixed integer $k \geq 5$, there are infinitely many $k$-vertex-critical $P_5$-free graphs.
\end{theorem}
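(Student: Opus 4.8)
The plan is to produce, for each fixed integer $k \ge 5$, an explicit infinite sequence of pairwise non-isomorphic $k$-vertex-critical $P_5$-free graphs, and to reduce the problem to a single base case. For a graph $G$ write $G^{+}$ for the graph obtained by adding one new vertex adjacent to every vertex of $G$. If $G$ is $P_5$-free then so is $G^{+}$, since a vertex adjacent to all others cannot lie on an induced $P_5$ (the latter has maximum degree $2$); and if $G$ is $(k-1)$-vertex-critical then $G^{+}$ is $k$-vertex-critical, because $\chi(G^{+}) = \chi(G)+1 = k$, deleting the new vertex returns $G$, and deleting $v \in V(G)$ leaves $(G-v)^{+}$ with $\chi = \chi(G-v)+1 < k$. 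Hence an infinite family for $k-1$ yields one for $k$ (the operation $G \mapsto G^{+}$ preserves non-isomorphism), and by induction it suffices to exhibit infinitely many $5$-vertex-critical $P_5$-free graphs.

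For the base case I would search for an explicit family $\{G_n\}_{n \ge n_0}$ and verify three things: (i) each $G_n$ is $P_5$-free, (ii) $\chi(G_n)=5$, and (iii) $G_n$ is vertex-critical. The natural starting point is a small $5$-vertex-critical $P_5$-free graph — for instance the join $K_2 \vee C_5$ of an edge and a $5$-cycle, or the blow-up of $C_5$ with part sizes $(1,2,2,2,2)$ — which one then tries to enlarge. The delicate point is that a vertex-critical graph has no two non-adjacent vertices with the same neighbourhood (if $N(x)=N(y)$ and $xy \notin E$, any proper colouring of $G-x$ extends to $G$ by colouring $x$ like $y$, so $\chi(G-x)=\chi(G)$), while the bound $|V|\le\chi\cdot\alpha$ forces $\alpha(G_n)\to\infty$ as $|V(G_n)|\to\infty$; thus the growing independent sets of $G_n$ cannot arise by substituting ever-larger independent sets into a fixed vertex of a small graph, nor can they come with a "backup" structure that keeps $G_n$ robustly $5$-chromatic after a deletion. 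The family must therefore introduce new vertices with pairwise distinct neighbourhoods, each of which is genuinely needed for the fifth colour. For $P_5$-freeness one has a convenient tool — the class of $P_5$-free graphs is closed under substitution (so any part of the construction realised by substituting $P_5$-free graphs into a $P_5$-free graph is automatically safe) — leaving only a bounded case analysis of the induced paths that pass through whatever non-substitution adjacencies the construction introduces.

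Properties (i) and (ii) I expect to be the lighter part: $\chi(G_n)\le 5$ from an explicit $5$-colouring, $\chi(G_n)\ge 5$ from pointing to an induced subgraph already known to be $5$-chromatic (a $K_5$, a $K_2\vee C_5$, or a $(1,2,2,2,2)$-blow-up of $C_5$), and $P_5$-freeness from the substitution remark plus the bounded check above. The main obstacle is (iii): for every vertex $v$ of $G_n$ one must produce a proper $4$-colouring of $G_n-v$. If the construction is symmetric enough that its automorphism group has only a bounded number of vertex orbits — with all the "new" vertices in one orbit — this becomes a constant number of cases, each requiring a genuine colouring argument tuned to the structure of $G_n$; the real work is to design one family for which all of these $4$-colourings succeed uniformly in $n$, and to confirm that as $n$ grows no stray induced $P_5$ is created and no vertex deletion secretly remains $5$-chromatic. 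Once such a $5$-vertex-critical family is in hand, the reduction above immediately promotes it to an infinite family of $k$-vertex-critical $P_5$-free graphs for every $k \ge 5$, which is the assertion of the theorem.
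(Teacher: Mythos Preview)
This theorem is quoted in the paper without proof (it is cited from Ho\`ang, Moore, Recoskie, Sawada and Vatshelle~\cite{HMRSV15}), so there is no in-paper argument to compare against; I comment only on the soundness of your plan.

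Your reduction step is correct and standard: adding a universal vertex to a $(k-1)$-vertex-critical $P_5$-free graph yields a $k$-vertex-critical $P_5$-free graph, and since the operation increases the order by one it sends an infinite family to an infinite family. So the statement for all $k\ge 5$ does reduce to $k=5$.

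The genuine gap is that you never exhibit the base family. Everything after your first paragraph is a description of what properties such a family would need and which tools might help check them (closure of $P_5$-freeness under substitution, exploiting automorphisms to cut down the number of $4$-colourings to produce), but no graphs $G_n$ are ever defined. You correctly identify the obstruction---a vertex-critical graph has no two non-adjacent vertices with equal neighbourhoods, yet $\alpha(G_n)\to\infty$---and this is precisely what makes the construction non-obvious; it cannot be finessed by methodology alone. The source paper~\cite{HMRSV15} resolves this by writing down an explicit infinite family and carrying out the three verifications directly. Your proposal would need either to reproduce such a construction or to cite theirs; as written, the entire content of the theorem---the existence of the $k=5$ family---is asserted but not supplied.
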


It is natural to consider which subclasses of $P_5$-free graphs have finitely many $k$-vertex-critical graphs.
In 2021, Cameron, Goedgebeur, Huang and Shi~\cite{CGHS21} obtained the following dichotomy result.

\begin{theorem}[\cite{CGHS21}]
		Let $H$ be a graph of order 4 and $k\ge 5$ be a fixed integer. Then there are infinitely many k-vertex-critical $(P_5,H)$-free graphs if and only if $H$ is $2P_2$ or $P_1+K_3$.
	\end{theorem}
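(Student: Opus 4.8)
The plan is to treat the two directions of the biconditional separately, after recording the eleven graphs on four vertices, namely $K_4$, the diamond $K_4-e$, the paw, $C_4$, $K_{1,3}$, $P_4$, $P_1+K_3$, $P_1+P_3$, $2P_2$, $2P_1+P_2$ and $4P_1$. The statement then amounts to: for each fixed $k\ge5$ there are infinitely many $k$-vertex-critical $(P_5,H)$-free graphs precisely when $H\in\{2P_2,P_1+K_3\}$, and only finitely many for each of the remaining nine graphs. So I would (a) prove finiteness for those nine graphs, and (b) construct infinite families for $H=2P_2$ and $H=P_1+K_3$.

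For part (a) I would first dispose of three cheap cases. If $H=4P_1$, every such graph has independence number at most $3$, so a $k$-colourable one has at most $3k$ vertices and there are only finitely many. If $H=P_4$, the graph is a cograph, hence perfect, so $\omega=\chi=k$ and vertex-criticality forces it to be $K_k$. If $H$ is the paw, then by Olariu's theorem a connected paw-free graph is triangle-free or complete multipartite; a vertex-critical complete multipartite graph is $K_k$, and a triangle-free $P_5$-free graph is $3$-colourable, so for $k\ge5$ none of the latter occur. For the remaining six graphs ($K_4$, diamond, $C_4$, $K_{1,3}$, $2P_1+P_2$, $P_1+P_3$) I would argue: a perfect such graph is again $K_k$; an imperfect one, by the Strong Perfect Graph Theorem, contains an odd hole or odd antihole of length at least $5$, and since it is $P_5$-free (so has no $C_t$ with $t\ge6$) and $k$-chromatic (so contains no $\overline{C_{2t+1}}$ with $t+1>k$), it must contain either a $C_5$ or an $\overline{C_{2t+1}}$ with $3\le t\le k-1$; moreover for $H\in\{C_4,\mathrm{diamond}\}$ every $\overline{C_{2t+1}}$ with $t\ge3$ already contains $H$, so the witness is forced to be a $C_5$. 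Having fixed such a witness $W$, I would partition $V(G)$ by adjacency to $V(W)$ and bound each class via criticality: for every vertex $v$ there is a $(k-1)$-colouring of $G-v$ that does not extend to $v$, so $N(v)$ meets all $k-1$ colour classes; combining this with $(P_5,H)$-freeness and a pigeonhole argument over the partition classes bounds $|V(G)|$, leaving finitely many graphs. The delicate, case-heavy bookkeeping of how the partition classes attach both to $W$ and to the colour classes is where most of the work lies, and I expect it to be the main obstacle on this side.

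For part (b) I must exhibit, for each fixed $k\ge5$, an infinite family of $k$-vertex-critical $(P_5,H)$-free graphs when $H=2P_2$ and when $H=P_1+K_3$. When $H=2P_2$ there is a useful reduction: every $P_5$ contains an induced $2P_2$, so $(P_5,2P_2)$-freeness is the same as $2P_2$-freeness, and it suffices to produce infinitely many $k$-vertex-critical $2P_2$-free graphs. In both cases I would first reduce to $k=5$ via $G\mapsto G\vee K_{k-5}$ (join with a clique), which preserves $P_5$-freeness, $2P_2$-freeness, $(P_1+K_3)$-freeness, vertex-criticality and, provided the family is taken join-indecomposable, distinctness; so an infinite family for $k=5$ yields one for every $k\ge5$. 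The real step is choosing the base family, and this is where the difficulty concentrates: naive scaling fails, since blowing a vertex up into a stable set preserves the forbidden subgraphs and the chromatic number but destroys vertex-criticality, while joins and disjoint unions produce only finitely many graphs of a fixed chromatic number, so the family must be engineered so that every vertex remains essential even as the graph grows. I would look for an explicit family (circulant-like, or obtained by repeatedly substituting small critical gadgets) and then verify by hand that each member is $P_5$-free, is $2P_2$-free --- equivalently that its complement has no induced $C_4$ --- respectively $(P_1+K_3)$-free, has chromatic number exactly $k$, and is vertex-critical. Designing this family so that vertex-criticality persists under the growth is, I expect, the single hardest point of the whole proof.
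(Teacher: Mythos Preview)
This theorem is not proved in the present paper at all: it is quoted, with attribution to~\cite{CGHS21}, as a known dichotomy that motivates the paper's own contribution on the five-vertex graph $H=\text{dart}$. Consequently there is no proof here against which to compare your proposal; the actual argument lives in the Cameron--Goedgebeur--Huang--Shi paper, and a fair assessment would require consulting that source rather than this one.

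That said, a brief remark on your plan. The finiteness side is broadly in the right spirit (perfect-graph reduction to a short odd hole or antihole, partition by attachment, pigeonhole via criticality), and indeed the present paper uses exactly this template for its own result. On the infiniteness side, your reduction ``join with $K_{k-5}$ and work at $k=5$'' is a standard move, but be careful with the claim that it preserves $(P_1+K_3)$-freeness: it does, since every vertex of the added clique is universal and hence cannot play the isolated vertex, but you should say this explicitly. The genuinely hard part you correctly identify is producing an explicit infinite base family that is simultaneously $2P_2$-free (respectively $(P_1+K_3)$-free), $5$-chromatic, and vertex-critical; in~\cite{CGHS21} this is done with concrete constructions rather than the vague ``circulant-like or substitution'' search you describe, and your outline as written does not yet contain that construction, which is the crux of direction~(b).
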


This theorem completely solves the finiteness problem of $k$-vertex-critical $(P_5,H)$-free graphs for $|H|=4$. In~\cite{CGHS21}, the authors also posed the
natural question of which five-vertex graphs $H$ lead to finitely many $k$-vertex-critical $(P_5,H)$-free graphs.

It is known that there are exactly
13 5-vertex-critical $(P_5,C_5)$-free graphs \cite{HMRSV15}. Recently, Cameron and Ho\`{a}ng constructed infinite families of $k$-vertex-critical $(P_5,C_5)$-free graphs for $k \geq 6$~\cite{CH23}. It has been proven that there are finitely many $k$-vertex-critical $(P_5,banner)$-free graphs for $k=5$~\cite{HLS19} and 6~\cite{CHLS19} and there are finitely many $k$-vertex-critical $(P_5,\overline{P_5})$-free graphs for fixed $k$~\cite{DHHMMP17}. Hell and Huang proved that there are finitely many
$k$-vertex-critical $(P_6,C_4)$-free graphs~\cite{HH17}. This was later generalized to $(P_t,K_{r,s})$-free graphs in the context of $H$-coloring~\cite{KP17}. This gives
an affirmative answer for $H=K_{2,3}$. In~\cite{CGS}, Cai, Goedgebeur and Huang showed that there are finitely many $k$-vertex-critical $(P_5,gem)$-free
graphs and finitely many $k$-vertex-critical $(P_5,\overline{P_3+P_2})$-free graphs.
Later, Cameron and Hoa\`ng \cite{CH23gem} gave a better bound
on the order of $k$-vertex-critical $(P_5,gem)$-free graphs and determined all such graphs for $k\le 7$.
Moreover, it has been proven that that there are finitely many
5-vertex-critical $(P_5,bull)$-free graphs~\cite{HLX23} and finitely many 5-vertex-critical $(P_5,chair)$-free graphs~\cite{HL23}.

\noindent {\bf Our contributions.} A dart (see \autoref{dart}) is the graph obtained from a diamond by adding a new vertex and making
it adjacent to exactly one vertex with degree 3 in the diamond. Our main result is as follows.

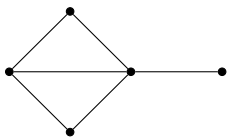
\begin{figure}[h]
		\centering
		\begin{tikzpicture}[scale=0.8]
			\tikzstyle{vertex}=[draw, circle, fill=black!100, minimum width=1pt,inner sep=1pt]
			
			\node[vertex](v1) at (-1,0) {};
			\node[vertex](v2) at (0,1) {};
			\node[vertex](v3) at (1,0) {};
			\node[vertex](v4) at (0,-1){};
			\node[vertex](v5) at (2.5,0) {};
			\draw (v1)--(v2)--(v3)--(v4)--(v1) (v1)--(v3) (v3)--(v5);	
		\end{tikzpicture}
		\caption{The dart graph.}
		\label{dart}
	\end{figure}

\begin{theorem}\label{th:6-vertex-critical}
   For every fixed integer $k \ge 1$, there are finitely many $k$-vertex-critical $(P_5, dart)$-free graphs.
\end{theorem}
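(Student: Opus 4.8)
The plan is to proceed by induction on $k$, following the strategy suggested in the abstract. The base cases $k \le 4$ are easy: $K_1, K_2$ are the only $1$- and $2$-vertex-critical graphs, $C_{2t+1}$-type reasoning (or the fact that $P_5$-free graphs contain no long induced odd cycles beyond $C_5$) handles $k=3$, and $k=4$ follows from known results on $4$-vertex-critical $P_5$-free graphs (there are finitely many). So assume $k \ge 5$ and that there are only finitely many $(k-1)$-vertex-critical $(P_5,dart)$-free graphs; let $c_{k-1}$ be the maximum order of such a graph. I want to show every $k$-vertex-critical $(P_5,dart)$-free graph $G$ has bounded order.

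The first structural step is to dispose of the perfect case. If $G$ is perfect, then since $\chi(G)=k$ we have a clique $K_k \subseteq G$, but one can show (this is where the Strong Perfect Graph Theorem plus the $(P_5,dart)$-free hypothesis enters) that a $k$-vertex-critical perfect graph must actually be $K_k$ itself — indeed, a vertex-critical graph with clique number equal to chromatic number and no smaller obstruction collapses to a clique. So assume $G$ is not perfect: by SPGT, $G$ contains an induced odd hole or odd antihole. Since $G$ is $P_5$-free, the only odd hole it can contain is $C_5$, and the only odd antihole is $C_5$ or $\overline{C_7}$; checking that $\overline{C_7}$ contains a dart (or handling it directly) leaves us with the key fact: $G$ contains an induced $C_5$, say on vertices $v_1,\dots,v_5$.

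The heart of the argument is then a careful analysis of how the rest of $G$ attaches to this fixed $C_5$. Partition $V(G)\setminus C_5$ according to the neighborhood pattern on $\{v_1,\dots,v_5\}$; there are at most $2^5$ such classes. Using $P_5$-freeness, $dart$-freeness, and the fact that $G$ is vertex-critical (so for each vertex $v$, $G-v$ has a $(k-1)$-coloring, and also $G$ has no clique cutset / no pair of comparable vertices), I would argue that most of these neighborhood classes are empty or are cliques of bounded size, and that the vertices with "large" neighborhoods on the $C_5$ are tightly controlled. The pigeonhole principle comes in to bound the number of vertices in each surviving class: if some class were too large, one extracts either an induced $P_5$, an induced $dart$, or a $(k-1)$-vertex-critical subgraph that is too big (contradicting $c_{k-1}$), or one finds that $G$ contains two non-adjacent vertices with the same or nested neighborhoods — which lets a $(k-1)$-coloring of a smaller critical subgraph be lifted to $G$, contradicting criticality. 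Summing the bounds over all neighborhood classes gives $|V(G)| \le f(c_{k-1})$ for an explicit function $f$, completing the induction.

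The main obstacle I anticipate is the neighborhood-class analysis around the $C_5$: ruling out large classes requires simultaneously juggling three forbidden configurations ($P_5$, $dart$, and oversized $(k-1)$-critical subgraphs) together with the criticality constraints, and the $dart$ — being only mildly restrictive (it has an independent-set "tail") — does less work than, say, a $gem$ or $bull$ would, so more of the burden falls on clever use of vertex-criticality and the no-clique-cutset property. Getting the case split on which $C_5$-neighborhood patterns can coexist to be both exhaustive and manageable is the delicate part; I would likely first prove a sequence of small claims (e.g. "no vertex has exactly three consecutive neighbors on the $C_5$ unless \dots", "the set of vertices complete to the $C_5$ is a clique of size at most $k - 3$", etc.) and then assemble them.
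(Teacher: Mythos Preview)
Your overall architecture---induction on $k$, base cases via the known bound on $4$-vertex-critical $P_5$-free graphs, then SPGT to force an odd hole or antihole, then a neighborhood partition around that substructure---matches the paper exactly. The gap is in your reduction to the $C_5$ case.

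You assert that ``the only odd antihole is $C_5$ or $\overline{C_7}$; checking that $\overline{C_7}$ contains a dart (or handling it directly) leaves us with the key fact: $G$ contains an induced $C_5$.'' Both halves of this fail. First, $\overline{C_7}$ is dart-free: in any $5$-vertex induced subgraph of $\overline{C_7}$ every vertex has degree at least $2$ (each vertex has only two non-neighbours in the whole graph), so no pendant vertex and hence no dart. The same argument shows $\overline{C_{2t+1}}$ is dart-free for every $t\ge 2$. Second, for $k\ge 6$ the relevant range of antiholes is $\overline{C_{2t+1}}$ for $3\le t\le k-2$, not just $t=3$; all of these are $(P_5,\text{dart})$-free and none of them contains an induced $C_5$ (in $C_{2t+1}$ any five vertices induce a disjoint union of paths, and $\overline{C_5}=C_5$). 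So a $k$-vertex-critical $(P_5,\text{dart})$-free graph can perfectly well contain $\overline{C_7}$ (or $\overline{C_9}$, etc.) and \emph{no} $C_5$ at all, and your $C_5$ analysis never touches it. The paper treats this as a genuinely separate structural lemma: it partitions $V(G)$ around the antihole, shows $S_0=\cdots=S_t=\emptyset$, shows each $S(X)$ with $t+1\le |X|\le 2t$ is a clique, and then bounds $S_{2t+1}$ via a Ramsey-type lemma.

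A smaller issue: your sample claim ``the set of vertices complete to the $C_5$ is a clique of size at most $k-3$'' is also false---$S_5$ need not be a clique. Bounding $S_5$ (and $S_{2t+1}$ in the antihole case) is exactly where the paper invokes its key technical Lemma~\ref{ST}, which is what drives the induction: any two non-adjacent vertices of $S_5$ together dominate $S_3^1\cup S_4$, and this plus a universal vertex from $C$ forces $|S_5|$ to be bounded in terms of $\chi(S_5)\le k-3$. You will need an argument of this flavour; simple clique bounds do not suffice here.
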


We prove a Ramsey-type statement (see \autoref{ST}) which allows us to prove our main result by induction on $k$.
This is another example of a result that is proved by induction on $k$ for the finitess problem besides the one in~\cite{DHHMMP17}.

We perform a careful structural analysis via Strong Perfect Graph Theorem combined with the pigeonhole principle based on the properties of vertex-critical graphs.
Moreover, for $k \in \{5, 6, 7\}$ we computationally determine a list of all $k$-vertex-critical $(P_5,dart)$-free graphs.

Our results imply the existence of polynomial-time certifying algorithm to decide the $k$-colorability of
$(P_5,dart)$-free graphs for $k \ge 1$.
(An algorithm is \textit{certifying} if, along with the answer given by the algorithm, it also gives a certificate which allows to verify in polynomial time that the output of the algorithm is indeed correct; in case of $k$-coloring the the certificate is either a $k$-coloring or a $(k+1)$-vertex-critical induced subgraph.)

\begin{theorem}
For every fixed integer $k\ge 1$, there is a polynomial-time certifying algorithm to decide the $k$-colorability of
$(P_5,dart)$-free graphs.
\end{theorem}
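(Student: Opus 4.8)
The plan is to derive the final statement (the existence of a polynomial-time certifying algorithm for $k$-colorability of $(P_5,dart)$-free graphs) as a routine consequence of \autoref{th:6-vertex-critical} together with the folklore connection recalled in \autoref{Folklore}. First I would observe that, by \autoref{th:6-vertex-critical} applied with parameter $k+1$, the set $\mathcal{C}_{k+1}$ of all $(k+1)$-vertex-critical $(P_5,dart)$-free graphs is finite; fix such a finite list once and for all (for $k \le 6$ this list is moreover explicitly computed later in the paper, but finiteness alone suffices). The key structural fact is that a graph $G$ is $k$-colorable if and only if $G$ contains no $(k+1)$-vertex-critical induced subgraph: if $G$ is not $k$-colorable then $\chi(G) \ge k+1$, and any induced subgraph of $G$ that is vertex-minimal with chromatic number exactly $k+1$ is $(k+1)$-vertex-critical; conversely a $(k+1)$-vertex-critical subgraph is not $k$-colorable, so neither is $G$. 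When $G$ is $(P_5,dart)$-free, every induced subgraph is also $(P_5,dart)$-free, so the obstruction, if present, lies in the finite family $\mathcal{C}_{k+1}$.

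Next I would spell out the algorithm. Given a $(P_5,dart)$-free input graph $G$ on $n$ vertices, for each $H \in \mathcal{C}_{k+1}$ test whether $G$ contains $H$ as an induced subgraph by brute force over all $|V(H)|$-subsets of $V(G)$; since $|\mathcal{C}_{k+1}|$ and $\max_{H}|V(H)|$ are constants depending only on $k$, this takes $O(n^{c})$ time for a constant $c = c(k)$. If some $H \in \mathcal{C}_{k+1}$ is found as an induced subgraph, output ``not $k$-colorable'' together with that copy of $H$ as the certificate: it is a $(k+1)$-vertex-critical induced subgraph and can be verified as such in constant time (independent of $n$), since checking $k$-colorability of a fixed-size graph and checking vertex-criticality are both finite computations. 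If no member of $\mathcal{C}_{k+1}$ occurs, then $G$ is $k$-colorable; to produce the positive certificate I would note that $k$-colorability of $(P_5,dart)$-free graphs is decidable in polynomial time (which already follows from \autoref{Folklore} applied with $k+1$ in place of $k$), and in fact one can make this constructive: run the standard self-reduction that repeatedly identifies non-adjacent vertices or adds edges while testing $k$-colorability via the same subgraph-search oracle, thereby extracting an explicit $k$-coloring $\phi$ in polynomial time, and output $\phi$, which is verified by checking $\phi(u)\neq\phi(v)$ for all edges $uv$ in $O(n^2)$ time.

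Finally I would remark on correctness and the overall running time: the algorithm is correct by the if-and-only-if characterization above, both branches run in time polynomial in $n$ with exponent depending only on $k$, and in each case the emitted certificate is checkable in polynomial time, so the algorithm is certifying in the required sense. The only genuine content is \autoref{th:6-vertex-critical}; everything else is bookkeeping. The mild obstacle worth addressing carefully is the positive certificate: one must ensure that an actual $k$-coloring (not merely a yes/no answer) is produced in polynomial time. This is handled by the standard binary-search / self-reduction trick wrapped around the polynomial-time decision procedure guaranteed by \autoref{Folklore}, which turns any polynomial-time $k$-colorability tester for a hereditary class into a polynomial-time algorithm that also constructs a $k$-coloring when one exists. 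I would therefore present the proof as a short paragraph invoking \autoref{th:6-vertex-critical}, \autoref{Folklore}, and this self-reduction, and omit the routine details of the brute-force subgraph search.
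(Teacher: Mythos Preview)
Your argument for the negative certificate (brute-force search for an induced member of the finite list $\mathcal{C}_{k+1}$) is correct and coincides with the paper's. The gap is in producing the \emph{positive} certificate. The ``standard self-reduction'' you invoke---repeatedly adding an edge between two non-adjacent vertices or identifying them, and re-running the decision oracle---does not work here, because neither operation preserves $(P_5,dart)$-freeness. For instance, $P_2+P_3$ is $(P_5,dart)$-free, but adding the edge joining an endpoint of the $P_2$ to an endpoint of the $P_3$ creates an induced $P_5$; identifying two suitably chosen non-adjacent vertices can likewise create an induced $P_5$. Once the modified graph leaves the class, your subgraph-search oracle (which only looks for the finitely many $(P_5,dart)$-free obstructions in $\mathcal{C}_{k+1}$) no longer decides $k$-colorability correctly, and the self-reduction can go wrong. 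Invoking \autoref{Folklore} does not repair this: that theorem yields only a decision procedure on the hereditary class, not a coloring, and wrapping a self-reduction around it runs into the same closure problem.

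The paper avoids this difficulty by citing an external constructive result: the polynomial-time algorithm of Ho\`ang, Kami\'nski, Lozin, Sawada and Shu for $k$-coloring $P_5$-free graphs, which actually outputs a $k$-coloring whenever one exists. Since every $(P_5,dart)$-free graph is in particular $P_5$-free, running that algorithm first supplies the positive certificate directly; only when it reports ``not $k$-colorable'' does one search for a $(k+1)$-vertex-critical induced subgraph. Your negative branch matches the paper's, but for the positive branch you must either invoke that constructive $P_5$-free coloring algorithm (as the paper does) or give a different argument that genuinely stays inside the hereditary class.
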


\begin{proof}
Let $G$ be a $(P_5,dart)$-free graph. We first run the polynomial-time algorithm for determining whether a $P_5$-free graph
is $k$-colorable from~\cite{HKLSS10} for $G$. If the answer is yes, the algorithm outputs a $k$-coloring of $G$.
Otherwise $G$ is not $k$-colorable. In this case, $G$ must contain a $(k+1)$-vertex-critical $(P_5,dart)$-free graph
as an induced subgraph.  For each $(k+1)$-vertex-critical $(P_5,dart)$-free graph $H$, it takes polynomial-time to check
whether $G$ contains $H$. Since there are only finitely many such graphs by \autoref{th:6-vertex-critical}, we can do this for
every such graph $H$ and the total running time is still polynomial.
\end{proof}

	The remainder of the paper is organized as follows. We present some preliminaries in Section~\ref{Preliminarlies} and give structural properties around an induced $C_5$ in a ($P_5$,dart)-free graph in Section~\ref{structure}. We show that there are finitely many $k$-vertex-critical ($P_5$,dart)-free graphs for all $k \ge 1$ in Section~\ref{6-vertex-critical} and computationally determine an exhaustive list of such graphs for $k \in \{5,6,7\}$ in Section~\ref{algorithmSection}. Finally, we give a conclusion in Section~\ref{conclusion}.

\section{Preliminaries}\label{Preliminarlies}
For general graph theory notation we follow~\cite{BM08}. For $k\ge 4$, an induced  cycle of length $k$ is called a {\em $k$-hole}. A $k$-hole is an {\em odd hole} (respectively {\em even hole}) if $k$ is odd (respectively even). A {\em $k$-antihole} is the complement of a $k$-hole. Odd and even antiholes are defined analogously.
	
	Let $G=(V,E)$ be a graph. If $uv\in E(G)$, we say that $u$ and $v$ are {\em neighbors} or {\em adjacent}, otherwise $u$ and $v$ are {\em nonneighbors} or  {\em nonadjacent}. The {\em neighborhood} of a vertex $v$, denoted by $N_G(v)$, is the set of neighbors of $v$. For a set $X\subseteq V(G)$, let $N_G(X)=\cup_{v\in X}N_G(v)\setminus X$. We shall omit the subscript whenever the context is clear. For $x \in V(G)$ and $S\subseteq V(G)$, we denote by $N_S(x)$ the set of neighbors of $x$ that are in $S$, i.e., $N_S(x)=N_G(x)\cap S$. For two sets $X,S\subseteq V(G)$, let $N_S(X)=\cup_{v\in X}N_S(v)\setminus X$.

For $X,Y\subseteq V(G)$, we say that $X$ is {\em complete} (resp. {\em anticomplete}) to $Y$ if every vertex in $X$ is adjacent (resp. nonadjacent) to every vertex in $Y$. If $X=\{x\}$, we write ``$x$ is complete (resp. anticomplete) to $Y$'' instead of ``$\{x\}$ is complete (resp. anticomplete) to $Y$''. If a vertex $v$ is neither complete nor anticomplete to a set $S$, we say that $v$ is {\em mixed} on $S$. For a vertex $v\in V$ and an edge $xy\in E$, if $v$ is mixed on $\{x,y\}$, we say that $v$ is {\em mixed} on $xy$. For a set $H\subseteq V(G)$, if no vertex in $V(G) \setminus H$ is mixed on $H$, we say that $H$ is a {\em homogeneous set}, otherwise $H$ is a {\em nonhomogeneous set}.

 A vertex subset $S\subseteq V(G)$ is {\em stable} if no two vertices in $S$ are adjacent. A {\em clique} is the complement of a stable set. Two nonadjacent vertices $u$ and $v$ are said to be {\em comparable} if $N(v)\subseteq N(u)$ or $N(u)\subseteq N(v)$.
 For an induced subgraph $A$ of $G$, we write $G-A$ instead of $G-V(A)$. For $S\subseteq V$, the subgraph \emph{induced} by $S$ is denoted by $G[S]$. We say that a vertex $w$ {\em distinguishes} two vertices $u$ and $v$ if $w$ is adjacent to exactly one of $u$ and $v$.

 We proceed with a few useful results that will be used later. The first folklore property of vertex-critical graph is that such graphs
 contain no comparable vertices. A generalization of this property was presented in~\cite{CGHS21}.

     \begin{lemma}[\cite{CGHS21}] \label{lem:XY}
		Let $G$ be a $k$-vertex-critical graph. Then $G$ has no two nonempty disjoint subsets $X$ and $Y$ of $V(G)$ that satisfy all the following conditions.
		\begin{itemize}
			\item $X$ and $Y$ are anticomplete to each other.
			\item $\chi(G[X])\le\chi(G[Y])$.
			\item Y is complete to $N(X)$.
		\end{itemize}
	\end{lemma}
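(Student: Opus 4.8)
The plan is to argue by contradiction: assuming such sets $X$ and $Y$ exist, I will build a $(k-1)$-coloring of $G$, contradicting $\chi(G)=k$. The template is the classical observation that a $k$-vertex-critical graph has no two nonadjacent comparable vertices $u,v$ with $N(u)\subseteq N(v)$: one deletes $u$, takes a $(k-1)$-coloring of $G-u$ (which exists by vertex-criticality), and colors $u$ with the color of $v$. Here $X$ plays the role of the ``smaller'' side $u$ and $Y$ the role of the ``larger'' side $v$.

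First I would pick any $x_0\in X$; by vertex-criticality $\chi(G-x_0)=k-1$, and since $G-X$ is an induced subgraph of $G-x_0$ this gives $\chi(G-X)\le k-1$. Fix a $(k-1)$-coloring $\phi$ of $G-X$. Note that $\phi$ colors both $Y$ and $N(X)$, since these sets are disjoint from $X$.

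The heart of the argument is a counting step. Since $Y$ is complete to $N(X)$, the color sets $\phi(Y)$ and $\phi(N(X))$ are disjoint, so $|\phi(N(X))|\le (k-1)-|\phi(Y)|$. As $\phi$ restricted to $Y$ is a proper coloring of $G[Y]$, we have $|\phi(Y)|\ge \chi(G[Y])\ge \chi(G[X])$. Hence the set $F$ of colors in $\{1,\dots,k-1\}$ missing from $\phi(N(X))$ satisfies $|F| = (k-1)-|\phi(N(X))| \ge |\phi(Y)| \ge \chi(G[X])$.

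Finally, because $|F|\ge \chi(G[X])$, I can properly color $G[X]$ using only colors from $F$, keeping $\phi$ on all other vertices. The result is a coloring of $G$ with at most $k-1$ colors; it is proper because $X$ is anticomplete to $V(G)\setminus(X\cup N(X))$ (in particular to $Y$), so the only possible new conflicts would lie on edges between $X$ and $N(X)$, and these are ruled out by the choice of $F$. This contradicts $\chi(G)=k$. I do not expect a genuine obstacle here; the only point needing care is to delete $X$ rather than $Y$ and to recognize that the hypothesis ``$Y$ complete to $N(X)$'', together with $\chi(G[Y])\ge\chi(G[X])$, is precisely what forces enough colors to remain free to accommodate $X$.
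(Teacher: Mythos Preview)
Your proof is correct. The paper does not supply its own proof of this lemma; it simply quotes the statement from \cite{CGHS21}. Your argument is the standard one and is essentially what appears in that reference: delete $X$, take a $(k-1)$-coloring of $G-X$ (which exists since $G-X\subseteq G-x_0$ for any $x_0\in X$), observe that the color classes used on $Y$ and on $N(X)$ are disjoint because $Y$ is complete to $N(X)$, and conclude that at least $\chi(G[Y])\ge\chi(G[X])$ colors are absent from $N(X)$, enough to recolor $X$.
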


\begin{lemma}[\cite{HLX23}]\label{homo}
	Let $G$ be a 5-vertex-critical $P_5$-free graph and $S$ be a homogeneous set of $V(G)$. For each component $A$ of $G[S]$,
	
	\begin{enumerate}[(i)]
		\item if $\chi(A)=1$, then $A$ is a $K_1$;
		\item if $\chi(A)=2$, then $A$ is a $K_2$;
		\item if $\chi(A)=3$, then $A$ is a $K_3$ or a $C_5$.
	\end{enumerate}
\end{lemma}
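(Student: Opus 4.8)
The plan is to exploit the criticality of $G$ to pin $G[A]$ down as a small vertex-critical graph, and then to recognise it.

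I would begin with two reductions. First, a component $A$ of the homogeneous set $S$ is itself a homogeneous set of $G$: any vertex of $S\setminus A$ lies in another component of $G[S]$ and so is anticomplete to $A$, and any vertex outside $S$ is complete or anticomplete to all of $S\supseteq A$. Hence it suffices to treat a connected homogeneous set $A$, and then (i) is immediate since a connected graph of chromatic number $1$ is $K_1$. Put $N=N_G(A)$; by homogeneity $N$ is complete to $A$ and the rest of $G$ is anticomplete to $A$, so $\chi(G[W\cup N])=\chi(G[W])+\chi(G[N])$ for every $W\subseteq A$, and in particular $\chi(G[N])\le\chi(G)-\chi(G[A])=5-\chi(G[A])$.

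The core step is to show that $G[A]$ is $t$-vertex-critical, where $t:=\chi(G[A])$ (so $t\in\{2,3\}$ and $|A|\ge 2$). Since $G$ is $5$-vertex-critical, $\chi(G-v)=4$ for every $v$; for $v\in A$, the induced subgraph $G[(A\setminus v)\cup N]$ of $G-v$ has chromatic number $\chi(G[A\setminus v])+\chi(G[N])$, so $\chi(G[A\setminus v])\le 4-\chi(G[N])$. When $\chi(G[N])=5-t$ this gives $\chi(G[A\setminus v])\le t-1$, hence $=t-1$ (removing a vertex lowers $\chi$ by at most one) for all $v\in A$, i.e.\ $G[A]$ is $t$-vertex-critical. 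The hard case is $\chi(G[N])<5-t$: here one must directly exclude the connected $P_5$-free graphs of chromatic number $t$ that fail to be vertex-critical --- for $t=3$ these are graphs such as the $3$-prism $\overline{C_6}$ --- using \autoref{lem:XY} together with the absence of an induced $P_5$; typically such a $G[A]$ contains two comparable vertices, or one locates an induced $P_5$ in $G$. This last part is the main obstacle. (For $t=2$ one can sidestep it entirely: a connected bipartite $P_5$-free graph on at least three vertices has two vertices on its larger side whose neighbourhoods are either comparable --- giving comparable vertices --- or incomparable --- giving an induced $2K_2$, whose two edges, since $A$ is connected and bipartite, are joined by a shortest path producing an induced $P_5$; as $G$ has neither, $G[A]=K_2$.)

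With $G[A]$ known to be $t$-vertex-critical, the conclusions follow quickly, since a $t$-vertex-critical graph coincides with each of its induced subgraphs of chromatic number $t$ (deleting a vertex outside such a subgraph would leave a subgraph of chromatic number still $t$, contradicting criticality). For $t=2$, $G[A]$ contains an edge and hence equals $K_2$. For $t=3$, $G[A]$ is non-bipartite and $P_5$-free, so it contains a chordless odd cycle; such a cycle has length $3$ or $5$ (length $\ge 7$ would contain an induced $P_5$) and already has chromatic number $3$, so it is all of $G[A]$, giving $G[A]\in\{K_3,C_5\}$.
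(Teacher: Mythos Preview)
Your plan has the right shape --- reduce to a connected homogeneous set $A$, show $G[A]$ is $\chi(A)$-vertex-critical, then identify the $P_5$-free vertex-critical graphs of chromatic number at most $3$ --- and your endgame is fine. The gap is exactly where you flag it: the case $\chi(N)<5-t$ for $t=3$. You propose to handle it by ad hoc exclusion (comparable vertices, finding a $P_5$), but you do not carry this out, and it is not clear that such a case analysis goes through cleanly. Your bound $\chi(A\setminus v)\le 4-\chi(N)$ is too weak precisely because it throws away the actual $4$-colouring of $G-v$ and keeps only the arithmetic.

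The paper avoids this case split entirely with a short recolouring argument (stated as the more general \autoref{lem:homoge}): suppose some $v\in A$ has $\chi(A\setminus v)=t$, and take a $4$-colouring $\phi$ of $G-v$. Since $N$ is complete to $A\setminus v$, the colour set $C:=\phi(A\setminus v)$ is disjoint from $\phi(N)$, and $|C|\ge\chi(A\setminus v)=t$. Now recolour: take any $t$-colouring of $A$ using $t$ colours from $C$, and keep $\phi$ on $G\setminus A$. Edges inside $A$ and inside $G\setminus A$ are fine; edges from $A$ to $N$ are fine because the colours on $A$ lie in $C$, disjoint from $\phi(N)$. This is a proper $4$-colouring of $G$, contradicting $5$-criticality. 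Thus $A$ is $t$-vertex-critical regardless of $\chi(N)$, and your identification step finishes the proof. Note that this argument uses neither $P_5$-freeness nor \autoref{lem:XY}; those enter only at the very end to list the $3$-vertex-critical $P_5$-free graphs.
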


We extend \autoref{homo} to all critical graphs, which may be of independent interest.

\begin{lemma}\label{lem:homoge}
Let $G$ be a $k$-vertex-critical graph and $S$ be a homogeneous set of $V(G)$.
For each component $A$ of $G[S]$, if $\chi(A) = m$ with $m < k$, then $A$ is an $m$-vertex-critical graph.
\end{lemma}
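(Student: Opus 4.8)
The plan is to argue by contradiction, combining the homogeneity of $S$ with a recoloring argument. First I would pin down the local structure around a component $A$ of $G[S]$. Since $S$ is homogeneous, every vertex of $V(G)\setminus S$ is complete or anticomplete to $S$; let $N$ denote the set of those that are complete to $S$. As $A$ is a component of $G[S]$, it is anticomplete to $S\setminus V(A)$, and any vertex outside $S$ with a neighbor in $A\subseteq S$ is complete to $S$, hence to $A$. Consequently $N_G(V(A))=N$, the set $A$ is complete to $N$, and $A$ is anticomplete to $V(G)\setminus(V(A)\cup N)$; in other words, the only edges leaving $A$ go to $N$.

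Now suppose, for contradiction, that $A$ is not $m$-vertex-critical. Since $\chi(A)=m$ while $\chi(A-u)\le m$ for every $u\in V(A)$, non-criticality yields a vertex $v\in V(A)$ with $\chi(A-v)=m$ (in particular $A-v\neq\emptyset$, as $m\ge 1$). Because $G$ is $k$-vertex-critical and $v\in V(G)$, we have $\chi(G-v)<k=\chi(G)$; I will contradict this by producing a proper coloring of all of $G$ using only $\chi(G-v)$ colors. Fix an optimal proper coloring $\psi$ of $G-v$ with $t:=\chi(G-v)$ colors, and let $C_A$ be the set of colors that $\psi$ uses on $A-v$. Then $|C_A|\ge\chi(A-v)=m$, and since $A-v$ is complete to $N$, the coloring $\psi$ uses no color of $C_A$ on any vertex of $N$. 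Pick a proper coloring of $A$ that uses $m$ colors, all chosen from $C_A$ (possible since $\chi(A)=m\le|C_A|$), recolor the vertices of $A$ by it, and leave $\psi$ unchanged on $V(G)\setminus V(A)$. The result is a proper coloring of $G$: it is proper inside $A$ by construction, it agrees with $\psi$ outside $A$, and every edge from $A$ to the rest of $G$ goes to $N$, whose colors avoid $C_A$. Hence $\chi(G)\le t=\chi(G-v)<\chi(G)$, a contradiction, and so $A$ is $m$-vertex-critical.

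I do not expect a genuine obstacle here; the only points needing care are the structural observation that the neighborhood of a component of a homogeneous set is ``clean'' (equal to $N$ and complete to $A$, with nothing else attached), and the verification that substituting a fresh coloring of $A$ creates no conflict with the retained colors on $N$ — both disposed of above. I remark that the generalized ``no comparable vertices'' principle of \autoref{lem:XY} does not seem to apply directly, since it is unclear how to build a witness set $Y$ that is complete to $N_G(v)=N_A(v)\cup N$; the self-contained recoloring above is the more transparent route.
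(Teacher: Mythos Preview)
Your proposal is correct and follows essentially the same approach as the paper's proof: assume a vertex $v\in V(A)$ with $\chi(A-v)=m$, take a $(k-1)$-coloring of $G-v$, and recolor $A$ with an $m$-coloring using only colors already present on $A-v$, which are avoided on $N(A)$ by completeness. Your write-up is in fact more careful than the paper's in making explicit why $A$ itself is homogeneous (via the component structure inside the homogeneous set $S$) and why $A-v\neq\emptyset$.
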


\begin{proof}
Suppose not. Then there exist some vertex $u \in V(A)$ such that $\chi(A-u) = m$. Since $G$ is
$k$-vertex-critical, $G-u$ is $(k-1)$-colorable. Let $\phi$ be a $(k-1)$-coloring of $G-u$ and
$\phi(A-u)=\{1,2,\ldots,s\}$ with $m \le s \le k-1$. Since $A$ is homogeneous, then $\phi(N(A-u))\notin \{1,2,\ldots,s\}$.
Let $f$ be an $m$-coloring  of $A$ and $f(A)= \{1,2,\ldots,m\}$. Let $g(A) = f$ and $g(G-A)= \phi|_{G-A}$. Then $g$ is a $(k-1)$-coloring of $G$. This contradicts $G$ is $k$-vertex-critical.

Therefore, $A$ is an $m$-vertex-critical graph.
\end{proof}

 Next, we prove an important lemma, which will be used frequently in the proof of our results.

\begin{lemma}\label{ST}
Let $G$ be a $dart$-free graph and $c$ be a fixed integer. Let $S,T$ be two disjoint subsets of $V(G)$ such that
$|S| \le c$ and $\overline{G[T \cup S]}$ is connected.
If every vertex in $S$ is adjacent to at least one of $w_1$ and $w_2$ where $w_1$ and $w_2$ are two arbitrary nonadjacent vertices in $T$,
and there exists a vertex $w \in V(G)\setminus{(S\cup T)}$ such that $w$ is complete
to $S\cup T$, then $|T| \le c({\chi(T))}^1 + \ldots + c({\chi(T)})^{2\chi(T)+1}$.
\end{lemma}

\begin{proof}
Let $N_0 = S$, $N_i = \{v|v \in T\setminus {\cup_{j=0}^{i-1}N_j}$, $v $ has a nonneighbor in $N_{i-1}\}$
       where $i \ge 1$. Let $u \in N_{i-1}$ and $v,v' \in N_{i}$ with $vv' \notin E(G)$.
       Since $\overline{G[T \cup S]}$ is connected, $N_i \neq \emptyset$.

        First, we show that $u$ is adjacent to at least one of $v$ and $v'$. If $i=1$, we are done.  Now we consider the case
       of $i \ge 2$. Suppose that $u$ is adjacent to neither $v$ nor $v'$. Let $u' \in N_{i-2}$ be the nonneighbor of $u$. Then $u'$ is complete to $v$ and $v'$ by the definition of $N_i$. Since $w$ is complete to $S\cup T$.
       Then  $\{v,v',u',w,u\}$ induces a dart. So $u$ is adjacent to at least one of $v$ and $v'$.

       Next, we show that $|N_i| \le c({\chi(T))}^{i}$.
       We have $\chi(N_i) \le \chi(T)$. Hence, we can partition $N_i$ into at most $\chi(T)$ stable sets. Because $u$ is adjacent to at least one of $v$ and $v'$, each vertex in $N_{i-1}$ has at most $\chi(T)$ nonneighbors in $N_i$. Since $|N_0| = |S| \le c$, we have $|N_i| \le c({\chi(T))}^{i}$.
 
       Finally, we show that $i \le 2\chi(T) +1$. Suppose not. By the definition of $N_i$, $N_i$ and $N_j$ are complete when $|i-j|>1$.
       Then we take a vertex $u_i \in N_i$ where $i$ is even. Now, $\{u_2,u_4,\ldots,u_{2\chi(T)+2}\}$ induces a $K_{\chi(T)+1}$, a contradiction.
       This shows $i \le 2\chi(T) +1$.

       Therefore, $|T| \le c({\chi(T))}^1 + \ldots + c({\chi(T)})^{2\chi(T)+1}$.
       \end{proof}

       The following theorem tells us there are finitely many 4-vertex-critical $P_5$-free graphs.

\begin{theorem}[\cite{BHS09,MM12}]\label{4-vertex-cri}
If $G = (V,E)$ is a 4-vertex-critical $P_5$-free graph, then $|V| \le 13$.
\end{theorem}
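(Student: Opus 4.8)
The plan is to reduce the problem, via the Strong Perfect Graph Theorem, to a bounded case analysis around a small induced subgraph, and then to exploit the absence of comparable vertices in a vertex-critical graph. Let $G$ be a $4$-vertex-critical $P_5$-free graph. First, if $G$ is perfect then $\chi(G)=\omega(G)=4$, so $G$ contains an induced $K_4$; since $K_4$ is already $4$-chromatic, deleting any vertex not in this $K_4$ would leave a graph of chromatic number at least $4$, contradicting $4$-vertex-criticality, so $G=K_4$ and $|V(G)|=4$. Hence assume $G$ is imperfect. By the Strong Perfect Graph Theorem, $G$ contains an induced odd hole or odd antihole. Because $C_t$ contains an induced $P_5$ for every $t\ge 6$, the only odd hole $G$ can contain is a $C_5$. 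Among odd antiholes $\overline{C_t}$ with $t\ge 5$ we have $\overline{C_5}=C_5$ and $\chi(\overline{C_t})=\lceil t/2\rceil$; since an induced subgraph of $G$ has chromatic number at most $\chi(G)=4$, the only genuinely new possibility is $\overline{C_7}$. A direct check shows $\overline{C_7}$ is $P_5$-free and $4$-vertex-critical, so if $G$ has an induced $\overline{C_7}$ then the same deletion argument forces $G=\overline{C_7}$, with $7\le 13$ vertices. It remains to treat the case where $G$ contains an induced $C_5$.

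For that case I would fix an induced cycle $C=v_1v_2v_3v_4v_5v_1$ and partition $V(G)\setminus V(C)$ into classes according to the trace $N_C(v)$ of each vertex $v$ on $C$; up to the dihedral symmetry of $C_5$ there are only a constant number of classes. $P_5$-freeness eliminates several of them --- for instance a vertex adjacent to exactly one vertex $v_i$ of $C$ creates an induced $P_5$ on $v,v_i,v_{i+1},v_{i+2},v_{i+3}$ --- and heavily restricts the adjacencies between the survivors. For each surviving class one then shows, again using $P_5$-freeness, that the class (or a suitable union of classes together with part of $C$) is a homogeneous set, or is dominated by a bounded number of vertices; one then applies \autoref{lem:XY}, the absence of comparable vertices, and the restricted structure of homogeneous sets in critical graphs given by \autoref{lem:homoge} and \autoref{homo}, to bound the size of each class by an absolute constant. (Alternatively, one may first invoke the Bacs\'o--Tuza theorem --- every connected $P_5$-free graph has a dominating set inducing a clique or a $P_3$ --- and, using that the dominating clique has at most three vertices unless $G=K_4$, partition $V(G)$ by adjacency to these few dominating vertices and argue as above.) Adding the constant class bounds to the five vertices of $C$ gives $|V(G)|\le 13$, and the bound is sharp, being attained by an explicit $13$-vertex graph.

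The step I expect to be the main obstacle is the structural analysis of the $C_5$ case: there are many attachment classes, many possible adjacency patterns among them, and every branch has to be closed off by a careful mixture of forbidden-$P_5$ arguments and criticality arguments (no comparable vertices, extending a proper $3$-colouring of $G-v$). In particular, obtaining the exact constant $13$ --- rather than merely some finite bound --- requires identifying the extremal configuration and verifying that any larger $P_5$-free graph with chromatic number $4$ would contain an induced $P_5$ or a pair of comparable vertices. By comparison, the perfect case and the $\overline{C_7}$ case are short.
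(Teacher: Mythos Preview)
The paper does not prove this theorem: it is quoted from \cite{BHS09,MM12} and used without proof as the base case of the induction in the proof of \autoref{th:6-vertex-critical}. There is therefore no argument in the paper to compare your attempt against.

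As a stand-alone attempt, your reductions in the perfect case and the $\overline{C_7}$ case are correct and cleanly stated. For the $C_5$ case, however, you only describe a strategy --- partition by attachment type on the cycle, argue that each class is homogeneous or dominated, then invoke criticality --- without carrying it out; you yourself flag this as ``the main obstacle,'' and it is. The full structural analysis, and in particular the work needed to reach the sharp constant $13$ rather than merely some finite bound, is the substance of the cited papers (especially \cite{MM12}), and it is considerably longer and more delicate than your sketch suggests. So what you have written is a plausible roadmap rather than a proof. One small technical point: \autoref{homo} is stated only for $5$-vertex-critical graphs and is not directly applicable here; the more general \autoref{lem:homoge} is applicable, but on its own it tells you only that homogeneous components are vertex-critical, not that they are small, so it does not by itself yield the size bounds you need.
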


A property on bipartite graphs is shown as follows.

\begin{lemma}[\cite{F93}]\label{2K2}
		Let $G$ be a connected bipartite graph. If $G$ contains a $2K_2$, then $G$ must contain a $P_5$.
	\end{lemma}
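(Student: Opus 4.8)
The plan is to use the connectivity of $G$ to join the two edges of the $2K_2$ by a shortest path and then extend that path by one vertex at each end, obtaining an induced path on at least five vertices. Write the induced $2K_2$ as the edges $a_1b_1$ and $a_2b_2$, so that $a_1,b_1,a_2,b_2$ are distinct and, crucially, $\{a_1,b_1\}$ is anticomplete to $\{a_2,b_2\}$. Since $G$ is connected, there is a path between these two pairs; among all choices of endpoints $u\in\{a_1,b_1\}$ and $v\in\{a_2,b_2\}$ I would fix one minimizing $\mathrm{dist}_G(u,v)$ and let $P=u\,w_1\cdots w_r\,v$ be a shortest $u$--$v$ path. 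Anticompleteness forces $\mathrm{dist}_G(u,v)\ge 2$, i.e.\ $r\ge 1$. Let $u'$ and $v'$ be the remaining endpoints of the two $2K_2$-edges, so that $u'u,\,vv'\in E(G)$.

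The heart of the argument is the claim that $P':=u'\,u\,w_1\cdots w_r\,v\,v'$ is an induced path; as it has $r+4\ge 5$ vertices, any five consecutive vertices along it then induce a $P_5$. To verify that $P'$ is induced one checks three things. (i) $P$ itself is chordless, being a geodesic. (ii) $u'$ has no neighbor among $w_1,\dots,w_r,v$, and symmetrically $v'$ has none among $u,w_1,\dots,w_r$: a chord $u'w_1$ would create the triangle $u'uw_1$, contradicting bipartiteness; a chord $u'w_j$ with $j\ge2$ would give a $u'$--$v$ walk shorter than $\mathrm{dist}_G(u,v)$, contradicting the choice of $(u,v)$; and $u'v$ is a non-edge by the anticompleteness of the $2K_2$. (iii) The four pairs $uv$, $uv'$, $u'v$, $u'v'$ are non-edges directly from that same anticompleteness. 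A short bookkeeping step — again using minimality of $P$ and the distinctness of $a_1,b_1,a_2,b_2$ — confirms that the vertices listed in $P'$ are pairwise distinct.

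The only genuinely load-bearing observations are the two ways of excluding chords from the pendant vertices $u'$ and $v'$ into the interior of $P$, and this is the step I would be most careful about: at distance one it is bipartiteness that kills the chord (via a triangle), and at distance two or more it is the minimality of the chosen geodesic. Everything else follows immediately from the definition of an induced $2K_2$, so I expect the lemma to go through with essentially no further work.
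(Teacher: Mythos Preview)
The paper does not prove this lemma; it simply quotes it from Fouquet~\cite{F93} without argument. Your proof is correct and self-contained: picking the pair $(u,v)$ across the two $K_2$'s at minimum distance and then ruling out chords from the pendants $u',v'$ into the geodesic---via a triangle (hence bipartiteness) at distance one and via the minimality of $\mathrm{dist}(u,v)$ at distance at least two---does exactly what is needed, and the distinctness of $u',v'$ from the internal $w_j$ also follows from that same minimality. So there is nothing to compare against in the paper, and your write-up stands on its own.
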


 The clique number of $G$, denoted by $\omega(G)$, is the size of a largest clique in $G$. A graph $G$ is {\em perfect} if $\chi(H)=\omega(H)$ for every induced subgraph $H$ of $G$. Another result we use is the famous Strong Perfect Graph Theorem.
	
	\begin{theorem}[The Strong Perfect Graph Theorem~\cite{CRST06}]\label{thm:SPGT}
		A graph is perfect if and only if it contains no odd holes or odd antiholes.
	\end{theorem}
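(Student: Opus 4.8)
The statement is the Strong Perfect Graph Theorem, so there is no short self-contained proof; what I can offer is the architecture of the Chudnovsky--Robertson--Seymour--Thomas argument, which is how I would organize a proof. I would first dispatch the easy direction: an odd hole $C_{2k+1}$ has chromatic number $3$ but clique number $2$, an odd antihole $\overline{C_{2k+1}}$ has chromatic number $k+1$ but clique number $k$, and since perfection is inherited by induced subgraphs, a perfect graph can contain neither. The entire content is therefore the converse: every \emph{Berge} graph (no odd hole, no odd antihole) is perfect, equivalently there is no \emph{minimally imperfect} Berge graph, i.e.\ an imperfect Berge graph all of whose proper induced subgraphs are perfect.

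Next I would assemble the classical machinery for a minimally imperfect graph $G$ on $n$ vertices: Lov\'asz's Perfect Graph Theorem ($G$ is perfect iff $\overline G$ is), the Replication Lemma, the identity $n=\omega(G)\alpha(G)+1$, the fact that every vertex lies in a maximum clique and in a maximum stable set, and Chv\'atal's Star Cutset Lemma (no minimally imperfect graph has a star cutset, in particular none has a homogeneous set or a pair of comparable vertices). Since the class of Berge graphs is closed under complementation, all of this can be used up to complementation.

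The core step I would need is the Decomposition Theorem for Berge graphs: every Berge graph $G$ is \emph{basic} --- $G$ or $\overline G$ is bipartite or is the line graph of a bipartite graph, or $G$ is a double split graph --- or else $G$ admits a \emph{$2$-join}, or $\overline G$ admits a $2$-join, or $G$ admits a \emph{balanced skew partition}. Proving this is itself a long structural program: one analyses whether $G$ contains a prism, a pyramid, or a (proper) wheel, uses the ``clean''/substantial-vertex reductions to control how the rest of $G$ attaches to such a configuration, treats ``long'' configurations and the line-graph-of-bipartite case via K\"onig-type arguments, and in each case either reaches a basic graph or extracts one of the listed decompositions. With this theorem in hand I would close the proof via three facts about a hypothetical minimally imperfect Berge graph $G$: (i) $G$ is not basic, since bipartite graphs, line graphs of bipartite graphs, their complements, and double split graphs are all easily checked to be perfect; (ii) $G$ admits no balanced skew partition --- the strengthening of Chv\'atal's Skew Partition Conjecture to the balanced case, and the hardest essentially self-contained step; (iii) $G$ admits no $2$-join, by the Cornu\'ejols--Cunningham reduction (a minimally imperfect graph with a $2$-join shrinks to a strictly smaller imperfect Berge graph), and likewise $\overline G$ admits none by Lov\'asz's theorem. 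Since $G$ is Berge, the Decomposition Theorem forces one of these four alternatives, all impossible; hence no minimally imperfect Berge graph exists and every Berge graph is perfect.

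The dominant obstacle, by a wide margin, is the Decomposition Theorem itself, together with the exclusion of balanced skew partitions in the endgame --- precisely the parts that fill most of the original $150$-plus-page paper, so any faithful execution of this plan is a reproduction of that case analysis rather than a short argument; in the present paper \autoref{thm:SPGT} is of course used only as a black box.
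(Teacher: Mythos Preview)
The paper gives no proof of \autoref{thm:SPGT}; it simply quotes the Strong Perfect Graph Theorem with a citation to Chudnovsky--Robertson--Seymour--Thomas and then invokes it as a black box in the proof of \autoref{th:6-vertex-critical}. So there is nothing in the paper to compare your proposal against.

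That said, your outline is an accurate high-level description of the CRST architecture: the trivial forward direction, the reduction to ``no minimally imperfect Berge graph'', the classical Lov\'asz/Chv\'atal machinery, the Decomposition Theorem (basic vs.\ $2$-join vs.\ complement $2$-join vs.\ balanced skew partition), and the endgame excluding each of those possibilities for a minimal counterexample. You also correctly flag that the Decomposition Theorem and the balanced-skew-partition step constitute essentially the entire $150^{+}$-page paper, so your proposal is a roadmap rather than a proof. For the purposes of this paper, your final remark is exactly right: \autoref{thm:SPGT} is used only as a cited result, and no self-contained argument is expected or provided.
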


\section{Structure around 5-hole}\label{structure}

Let $G=(V,E)$ be a graph and $H$ be an induced subgraph of $G$.
We partition $V\setminus V(H)$ into subsets with respect to $H$ as follows:
for any $X\subseteq V(H)$, we denote by $S(X)$ the set of vertices
in $V\setminus V(H)$ that have $X$ as their neighborhood among $V(H)$, i.e.,
\[S(X)=\{v\in V\setminus V(H): N_{V(H)}(v)=X\}.\]
For $0\le m\le |V(H)|$, we denote by $S_m$ the set of vertices in $V\setminus V(H)$ that have exactly $m$
neighbors in $V(H)$. Note that $S_m=\bigcup_{X\subseteq V(H): |X|=m}S(X)$.

Let $G$ be a $(P_5,dart)$-free graph and $C=v_1,v_2,v_3,v_4,v_5$ be an induced $C_5$ in $G$.
We partition $V\setminus C$ with respect to $C$ as follows, where all indices below are modulo five.

$S_0=\{v \in V\backslash V(C): N_C(v)=\emptyset\},$

$S_2(i)=\{v \in V\backslash V(C): N_C(v)=\{v_{i-1},v_{i+1}\}\},$

$S_3^1(i)=\{v \in V\backslash V(C): N_C(v)=\{v_{i-1},v_i,v_{i+1}\}\},$

$S_3^2(i)=\{v \in V\backslash V(C): N_C(v)=\{v_{i-2},v_i,v_{i+2}\}\},$

$S_4(i)=\{v \in V\backslash V(C): N_C(v)=\{v_{i-2},v_{i-1},v_{i+1},v_{i+2}\}\},$

$S_5=\{v \in V\backslash V(C): N_C(v)=V(C)\}.$

Let  $S_3^1=\bigcup_{i=1}^5S_3^1(i)$, $S_3^2=\bigcup_{i=1}^5S_3^2(i)$ and $S_4=\bigcup_{i=1}^5S_4(i)$.
Since $G$ is $P_5$-free, we have that

$$ V(G)=S_0\cup S_2\cup S_3^1\cup S_3^2\cup S_4\cup S_5.$$

We now prove a number of useful properties of these sets using the fact that $G$ is $(P_5,dart)$-free. All properties are proved for $i=1$
due to symmetry.

 \begin{enumerate}[label=\bfseries (\arabic*)]

         \item $S_0$ is anticomplete to $S_2 \cup S_3^1$.\label{s0s2s31}

         Let $u \in S_0$, $v \in S_2(1) \cup S_3^1(1)$. If $uv \in E(G)$, then $\{u,v,v_2,v_3,v_4\}$ is an induced $P_5$.

          \item For each $1 \leq i\leq 5$, $S_3^2(i)$ is not mixed on any edge of $S_0$.\label{s0s32}

          Let $uu'$ be an edge of $S_0$, and $v \in S_3^2(1)$. If $uv \in E(G)$ and $uv' \notin E(G)$, then $\{u',u,v,v_1,v_2\}$ is an induced $P_5$.

          \item $S_0$ is anticomplete to $S_4 \cup S_5$ .\label{s0s4s5}

          Let $u \in S_0$ and $v \in S_4(1) \cup S_5$. If $uv \in E(G)$, then $\{v_3,v_4,v_5,v,u\}$ is a dart.

           \item  Let $A$ be a component of $S_0$, then $A$ is homogeneous and $P_3$-free.\label{s0p3-free}

        By~\ref{s0s2s31}--\ref{s0s4s5}, we have $A$ is homogeneous. Suppose that $A$ contains a $P_3 =uvw$. Since $G$ is connected, there must exist some vertex $x \in N(A)$. By~\ref{s0s2s31}--\ref{s0s4s5},
        $x \in S_3^2(i)$ for some $1 \leq i \leq 5$. Then $\{u,v,w,x,v_i\}$ is a dart. So, $A$ is $P_3$-free.

         \item For each $1 \leq i \leq 5$, $S_3^1(i)$ is not mixed on any edge of $S_2(i)$.\label{s2s311}

         Let $uu'$ be an edge of $S_2(1)$ and $v \in S_3^1(1)$. If $vu' \in E(G)$ and $vu \notin E(G)$, then $\{u,u',v,v_2,v_3\}$ is a dart.

         \item  For each $1 \leq i \leq 5$, $S_2(i)$ is complete to $S_3^1(i+1) \cup S_3^1(i-1)$.\label{s2s312}

         Let $u \in S_2(1), v \in S_3^1(2)$. If $uv \notin E(G)$, then $\{v_1,v,v_3,v_2,u\}$ is a dart. By symmetry, $S_2(1)$ is complete to $S_3^1(5)$.

         \item For each $1 \leq i \leq 5$, $S_2(i)$ is anticomplete to $S_3^1(i+2) \cup S_3^1(i-2)$.\label{s2s313}

         Let $u \in S_2(1), v \in S_3^1(3)$. If $uv \in E(G)$, then $\{v_1,v_5,u,v,v_3\}$ is an induced $P_5$. By symmetry, $S_2(1)$ is anticomplete to $S_3^1(4)$.

         \item For each $1 \leq i \leq 5$, $S_2(i)$ is complete to $S_3^2(i)$.\label{s2s321}

         Let $u \in S_2(1)$ and $v \in S_3^2(1)$. If $uv \notin E(G)$, then $\{u,v_5,v_1,v,v_3\}$ is an induced $P_5$.

         \item For each $1 \leq i \leq 5$, $S_2(i)$ is anticomplete to $S_3^2 \setminus S_3^2(i)$.\label{s2s322}

         Let $u \in S_2(1), v \in S_3^2(2)$. If $uv \in E(G)$, then $\{u,v,v_4,v_5,v_1\}$ is a dart. By symmetry, $S_2(1)$ is anticomplete to $S_3^1(5)$. Let $ w \in S_3^2(3)$. If $uw \in E(G)$, then $\{u,w,v_1,v_5,v_3\}$ is a dart. By symmetry, $S_2(1)$ is anticomplete to $S_3^1(4)$.

         \item For each $1 \leq i \leq 5$, $S_2(i)$ is anticomplete to $S_4(i)$.\label{s2s41}

         Let $u \in S_2(1), v \in S_4(1)$. If $uv \in E(G)$, then $\{u,v,v_3,v_2,v_1\}$ is a dart.

         \item For each $1 \leq i \leq 5$, $S_2(i)$ is complete to $S_4\setminus S_4(i)$.\label{s2s42}

          Let $u \in S_2(1), v \in S_4(2)\cup S_4(3)$. If $uv \notin E(G)$, then $\{v_1,v,v_4,v_5,u\}$ is a dart. By symmetry, $S_2(1)$ is complete to $S_4(5) \cup S_4(4)$.

          \item For each $1 \leq i \leq 5$, $S_2(i)$ is complete to $S_2(i+1)\cup S_2(i-1)$.\label{s2s22}

          Let $u \in S_2(1), v \in S_2(2)$. If $uv \notin E(G)$, then $\{u,v_5,v_1,v,v_3\}$ is an induced $P_5$. By symmetry, $S_2(1)$ is complete to $S_2(5)$.

          \item For each $1 \leq i \leq 5$, $S_2(i+2)\cup S_2(i-2)$ is not mixed on any edge of $S_2(i)$.\label{s2s23}

          Let $uu'$ be an edge of $S_2(1)$ and $v \in S_2(3)$. If $vu' \in E(G)$ and $vu \notin E(G)$, then $\{v_3,v_4,v,u',u\}$ is an induced $P_5$. By symmetry, $S_2(4)$ is not mixed on any edge of $S_2(1)$.

          \item If $S_2 \neq \emptyset$, then $S_5 = \emptyset$. If $S_5 \neq \emptyset$, then $S_2 = \emptyset$.\label{s5ands2}

        Suppose that $S_2 \neq \emptyset$ and $S_5 \neq \emptyset$. Let $u \in S_2(1)$ and $v \in S_5$. If $uv \notin E(G)$, then $\{v_1,v,v_4,v_5, u\}$ is a dart. If $uv \in E(G)$, then $\{u,v_2,v_1,v,v_4\}$ is a dart.

          \item  Let $A$ be a component of $S_2(i)$. Then $A$ is homogeneous and $A$ is $P_3$-free.\label{6-vertex:s2}

        By~\ref{s2s311}--\ref{s5ands2}, we have $A$ is a homogeneous set. Suppose that $A$ contains a $P_3=uvw$, then $\{u,v,w,v_{i-1},v_i\}$
        is a dart. So $A$ is $P_3$-free.

          \item For each  $1 \leq i \leq 5$, $S_5$ is complete to $S_3^2(i)$.\label{s5s32}

          Let $u \in S_5$, $v \in S_3^2(1)$. If $uv \notin E(G)$, then $\{v_2,u,v_5,v_1,v\}$ is a dart.

          \item Let $u,u' \in S_5$ with $uu' \notin E(G)$. Then every vertex in $S_3^1 \cup S_4$ is adjacent to at least one of $u$
          and $u'$.\label{s5s31s4}

          Let $v \in S_3^1(1) \cup S_4(4)$. If $v$ is adjacent to neither $u$ nor $u'$, then $\{u,u',v_4,v_5,v\}$ is a dart.

          \item For $1\le i \le 5$, $S_3^1(i)$, $S_3^2(i)$ and $S_4(i)$ is a clique, respectively.\label{s31s32s4:clique}

           Let  $u,v \in S_3^1(1)$. If $uv \notin E(G)$, then $\{u,v,v_1,v_5,v_4\}$ is a dart. So $S_3^1(1)$ is a clique.
            Let $u,v \in S_3^2(1)$. If $uv \notin E(G)$, then $\{u,v,v_4,v_3,v_2\}$ is a dart. So $S_3^2(1)$ is a clique.
            Let $u,v \in S_4(1)$. If $uv \notin E(G)$, then $\{u,v,v_3,v_2,v_1\}$ is a dart. So $S_4(1)$ is a clique.

         \end{enumerate}

  \section{The proof of \autoref{th:6-vertex-critical}}\label{6-vertex-critical}

\begin{proof}
       We prove the theorem by induction on $k$. If $1 \le k \le 4$, there are finitely many $k$-vertex-critical $(P_5,dart)$-free
       graphs by \autoref{4-vertex-cri}. In the following, we assume that $k \ge 5$ and there are finitely many $i$-vertex-critical
       graphs for $i \le {k-1}$. Now, we consider the case of $k$.

        Let $G=(V,E)$ be a $k$-vertex-critical $(P_5,dart)$-free graph. We show that $|G|$ is bounded. Let $\mathcal{L} = \{K_k,\overline{C_{2k-1}}\}$. If $G$ has a subgraph isomorphic to
  a member $L \in \mathcal{L}$, then $|V(G)|=|V(L)|$ by the definition of vertex-critical and so we are done.
  So, we assume in the following $G$ has no induced subgraph isomorphic to a member in $\mathcal{L}$. Then $G$ is imperfect. Since $G$ is $k$-vertex-critical and $\chi(\overline{C_{2t+1}}) \geq k+1$ if $t \geq k$, it follows that $G$ does not contain $\overline{C_{2t+1}}$ for $t \geq k$.
  Moreover, since $G$ is $P_5$-free, it does not contain $C_{2t+1}$ for $t \geq 3$. It then follows from \autoref{thm:SPGT}, $G$ must contain some $\overline{C_{2t+1}}$ for $2 \le t \le k-2$. To finish the proof, we only need to prove the following two lemmas.
  \subsection{5-hole}

  \begin{lemma}\label{lem:6-vertex:c5}
   If $G$ contains an induced $C_5$, then $G$ has finite order.
  \end{lemma}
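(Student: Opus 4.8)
The plan is to bound the order of a $k$-vertex-critical $(P_5,dart)$-free graph $G$ that contains an induced $C_5$, say $C=v_1v_2v_3v_4v_5$, by bounding the size of each of the sets $S_0, S_2, S_3^1, S_3^2, S_4, S_5$ in the partition of $V\setminus C$. Since $G$ has no $K_k$, the clique number of $G$ is at most $k-1$, so every clique in $G$ (in particular each $S_3^1(i)$, $S_3^2(i)$, $S_4(i)$ by property~\ref{s31s32s4:clique}) has size at most $k-1$; hence $|S_3^1|, |S_3^2|, |S_4|$ are all at most $5(k-1)$, a constant. So the real work is to bound $|S_0|$, $|S_2|$, and $|S_5|$.

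For $S_0$: by~\ref{s0p3-free} each component $A$ of $S_0$ is a homogeneous, $P_3$-free set, so $A$ is a clique, and by \autoref{lem:homoge} (applied with the $k$-vertex-criticality of $G$, noting $\chi(A)\le\omega(G)\le k-1<k$) each such component is $\chi(A)$-vertex-critical, forcing $A$ to be a single $K_t$ with $t\le k-1$. To bound the number of components of $S_0$ I would use \autoref{lem:XY}: if there were two components $A, A'$ of $S_0$ with $\chi(A)\le\chi(A')$, then since $S_0$-components are homogeneous and anticomplete to each other, one checks whether $A'$ can be taken complete to $N(A)$ — more precisely, since every component of $S_0$ attaches only into $S_3^2$ and does so in a controlled way, I would argue that two components with the same neighborhood are forbidden by comparability, and that the neighborhoods live inside the (bounded) set $S_3^2$, giving only boundedly many possible neighborhoods; combined with a per-neighborhood-class bound this yields $|S_0|\le g(k)$ for some function $g$. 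The same style of argument (homogeneity via~\ref{6-vertex:s2}, $P_3$-freeness making each component a clique, \autoref{lem:homoge} making it a single bounded clique, and \autoref{lem:XY} plus the bounded attachment into $S_3^1\cup S_4$) bounds $|S_2|$.

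For $S_5$: this is where I expect the main difficulty, and where \autoref{ST} is designed to be used. If $S_5$ is a clique it has size $\le k-1$ and we are done, so assume there exist nonadjacent $u,u'\in S_5$. By~\ref{s5s31s4}, every vertex of $S_3^1\cup S_4$ is adjacent to at least one of $u,u'$; by~\ref{s5s32}, $S_5$ is complete to $S_3^2$. I would set $S = $ an appropriate bounded subset of $V(C)\cup S_3^2$ (which is complete to, or well-controlled with respect to, the relevant vertices) with $|S|\le c$ for an absolute constant $c$, take $T = S_5$, take $w_1=u$, $w_2=u'$, and take $w$ to be a vertex of $C$ — say $v_1$, which is complete to all of $S_5$ — to invoke \autoref{ST} and conclude $|S_5|\le f(\chi(S_5))\le f(k)$. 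The bookkeeping here (verifying $T$ is not complete to $S$, that each vertex of $S$ hits $w_1$ or $w_2$, and that the chosen $w$ is genuinely complete to $S\cup T$) is the delicate part; an alternative if the direct hypotheses do not fit is to partition $S_5$ by its adjacency pattern to $\{u,u'\}$ and apply a \autoref{lem:XY}-type or \autoref{ST}-type argument to each part. Once $|S_0|,|S_2|,|S_3^1|,|S_3^2|,|S_4|,|S_5|$ are each bounded by a function of $k$, summing gives $|V(G)|\le $ a function of $k$, completing the proof of \autoref{lem:6-vertex:c5}.
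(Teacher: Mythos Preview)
Your outline matches the paper's structure, and your treatment of $S_3^1,S_3^2,S_4$ and of $S_0$ is essentially the paper's argument. However, there is a genuine gap in your plan for $S_2$, and a misdirected setup for $S_5$.

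\textbf{The $S_2$ gap.} You assert that components of $S_2(i)$ have ``bounded attachment into $S_3^1\cup S_4$'' and then pigeonhole on that. But the homogeneity in~\ref{6-vertex:s2} only says that no vertex is \emph{mixed on a single component}; two distinct components of $S_2(1)$ can still have different neighborhoods, and those neighborhoods vary not only in $S_3^1(1)$ but also in $S_2(3)\cup S_2(4)$ (by~\ref{s2s23} these sets are only required not to be mixed on \emph{edges} of $S_2(1)$, which says nothing about $K_1$-components). Since $S_2(3)\cup S_2(4)$ is part of the very set you are trying to bound, a direct pigeonhole/\autoref{lem:XY} argument does not close. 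The paper handles this with an ad hoc structural argument: pigeonhole only on the bounded set $S_3^1(1)$ to get three $K_1$-components $y_1,y_2,y_3$ of $S_2(1)$ with identical $S_3^1(1)$-neighborhoods, and then a case analysis (using $P_5$- and dart-freeness) on where their private distinguishers in $S_2(3)\cup S_2(4)$ can lie forces a contradiction. You are missing that extra step.

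\textbf{The $S_5$ setup.} First, you omit property~\ref{s5ands2}: $S_2$ and $S_5$ cannot both be nonempty, so one never has to bound both simultaneously. Second, for applying \autoref{ST} to $T=S_5$, your proposed $S\subseteq V(C)\cup S_3^2$ is the wrong choice: $S_5$ is complete to $V(C)$ by definition and to $S_3^2$ by~\ref{s5s32}, so $T$ is complete to any such $S$ and the hypothesis of \autoref{ST} fails. The paper takes $S=S_3^1\cup S_4$ (bounded, and by~\ref{s5s31s4} every vertex there hits at least one of any nonadjacent pair in $S_5$) and splits into two cases: if $S_5$ is complete to $S_3^1\cup S_4$ then $S_5$ is homogeneous and \autoref{lem:homoge} plus the inductive hypothesis finishes; otherwise \autoref{ST} applies with this $S$. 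Your fallback of ``partitioning $S_5$ by its adjacency to $\{u,u'\}$'' is not how the paper proceeds and would not obviously terminate.
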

  \begin{proof}

  Let $C=v_1,v_2,v_3,v_4,v_5$ be an induced $C_5$. We partition $V(G)$ with respect to $C$.

  Since $G$ is $K_k$-free, combined with ~\ref{s31s32s4:clique}, we have $S_3^1(i)$, $S_3^2(i)$ and $S_4(i)$ is $K_{k-2}$-free, respectively.
Then $|S_3^1(i)| \le k-3$, $|S_3^2(i)| \le k-3$ and $|S_4(i)| \le k-3$. Therefore, $|S_3^1| \le 5k-15$, $|S_3^2| \le 5k-15$ and $|S_4| \le 5k-15$.
Thus, in the following, we only need to bound $S_0$, $S_2$ and $S_5$.

  We first bound $S_0$.

         \begin{claim}\label{6-vertex:s0:homo}
         Let $A$ be a component of $S_0$, then  $\chi(A) \leq k-2.$
         \end{claim}
         \begin{proof}
         Suppose that $\chi(A) \geq k-1$. Since $G$ is connected, there must exist $v \in N(A)$ and $v \in S_3^2$ by~\ref{s0s2s31}--\ref{s0s4s5}. Since $A$ is homogeneous, $v$ is complete to $A$. Then $\chi(G[V(A)\cup \{v\}]) \geq k$. Since $G[V(A)\cup \{v\}]\subset G$, it contradicts with $G$ is $k$-vertex-critical.
        \end{proof}

 Let $A$ be a component of $G$,  we call $A$ a $K_i$-component if $A \cong K_i$ where $i \ge 1$.

         \begin{claim}\label{6-vertex:s0}
         Each component of $S_0$ is a $K_m$ where $1 \le m \le k-2$.
         For each $1 \le m \le k-2$, the number of $K_m$-components is not more than $2^{5k-15}$.
         \end{claim}

         \begin{proof}
          By~\ref{s0p3-free}, $S_0$ is a disjoint  union of cliques. Combined with \autoref{6-vertex:s0:homo}, it follows that each component of $S_0$ is a $K_m$ where $1 \le m \le k-2$.

          Suppose that the number of $K_1$-components in $S_0$ is more than $2^{5k-15} \geq 2^{|S_3^2|}$. The pigeonhole principle shows that
           there are two $K_1$-components $u,v$ having the same neighborhood in $S_3^2$. Since $S_0$ is anticomplete to $S_2 \cup S_3^1\cup S_4 \cup S_5 $. Then $u,v$ have the same neighborhood in $V(G)$. This contradicts with \autoref{lem:XY}.

           Similarly, we can show that the number of $K_2$-components $,\ldots,$ $K_{k-2}$-components is not more than $2^{5k-15}$, respectively.
           \end{proof}

           By \autoref{6-vertex:s0}, it follows that $S_0$ is bounded. Next, we bound $S_2$ and $S_5$. Note that at least one of $S_2$ and
           $S_5$ is an empty set by~\ref{s5ands2}. In the following, we first assume that $S_5 \neq \emptyset$. Then $S_2 = \emptyset$.

       \begin{claim}\label{k-ver:s5}
       $S_5$ is bounded.
       \end{claim}

       \begin{proof}

Let $N_0= S_3^1 \cup S_4$, $N_i = \{v|v \in S_5\setminus {\cup_{j=0}^{i-1}N_j}$, $v $ has a nonneighbor in $N_{i-1}\}$
       where $i \ge 1$.

       If $\overline{G[S_5 \cup S_3^1 \cup S_4]}$ is connected. Since $|S_3^1 \cup S_4| \le 10k-30$ and there exist $w \in V(C)$
       such that $w$ is complete to $S_3^1 \cup S_4 \cup S_5$. By~\ref{s5s31s4}, every vertex in $S_3^1 \cup S_4$ is adjacent to at least one of $w_1$ and $w_2$ where $w_1,w_2 \in S_5$ and $w_1w_2 \notin E(G)$. Then $|S_5|$ is a function of $\chi(S_5)$ by \autoref{ST}.
      Since $\chi(S_5) \le k-3$, it follows that $S_5$ is bounded.

       If $\overline{G[S_5 \cup S_3^1 \cup S_4]}$ is not connected. Then there exists an integer $j \ge 0$ such that $N_0,N_1,\ldots,N_j \neq \emptyset$ but $N_{j+1} = \emptyset$. Then $S_5 - \cup_{i=0}^{j}N_i$ is complete to $\cup_{i=0}^{j}N_i$, and so $S_5 - \cup_{i=0}^{j}N_i$
       is a homogeneous set.
       Since $\chi(S_5 - \cup_{i=0}^{j}N_i) \le k-3$, each component of $S_5 - \cup_{i=0}^{j}N_i$ is an $m$-vertex-critical graph with $1 \le m \le k-3$ by \autoref{lem:homoge}.
        By the inductive hypothesis, it follows that there are finitely many $m$-vertex-critical $(P_5,dart)$-free graphs with $1 \le m \le k-3$. By the pigeonhole principle, the number of each kind of graph is not more than $2$. So, $S_5 - \cup_{i=0}^{j}N_i$ is bounded.
        For each $N_i$ with $1 \le i \le j$, $N_i$ is bounded by \autoref{ST}. Therefore, $S_5$ is bounded.
       \end{proof}

     Thus, $|G|$ is bounded if $S_5 \neq \emptyset$. Next, we assume that $S_2 \neq \emptyset$. Then $S_5 = \emptyset$ by~\ref{s5ands2}.

         \begin{claim}\label{6-vertex:s2:k1k2k3k4}
          Each component of $S_2(i)$ is a $K_m$  where $1 \le m \le k-2$. For each $1 \le m \le k-2$, the number of $K_m$-components is not more than $2^{k-2}+1$.
       \end{claim}

       \begin{proof}
       By~\ref{6-vertex:s2}, $S_2(i)$ is a disjoint  union of cliques. If $S_2(i)$ contains a $K_{k-1}$, then $G$ contains a $K_k$, a contradiction.
       So each component of $S_2(i)$ is a $K_m$ where $1 \le m \le k-2$.

       Suppose that the number of $K_1$-components in $G[S_2(1)]$ is more than $2^{k-2}+1 \geq 2\cdot 2^{|S_3^1(1)|}+1$. By the pigeonhole  principle, there are $y_1,y_2,y_3$ having the same neighborhood  in $S_3^1(1)$. For any  $y_i \neq y_j$ with $i,j \in \{1,2,3\}$, since $y_i$ and $y_j$ are not comparable, there must exist $y_i' \in {N(y_i)}\setminus{N(y_j)}$, $y_j' \in {N(y_j)}\setminus {N(y_i)}$. By~\ref{s2s311}--\ref{s2s23}, $y_i', y_j' \in S_2(3)\cup S_2(4)$.

         If $y_i',y_j' \in S_2(3)$, then $y_i'y_j' \in E(G)$. For otherwise $\{y_i',y_i,v_5,y_j,y_j'\}$ is an induced $P_5$. Then $\{y_i',y_j',y_j,v_2,v_1\}$ is a dart, a contradiction. By symmetry, $y_i',y_j' \notin S_2(4)$. In the following, by symmetry, we assume that $y_i' \in S_2(3)$,
         $y_j' \in S_2(4)$. Then $y_i'y_j' \in E(G)$, otherwise $\{y_j,y_j',v_3,v_4,y_i'\}$ is an induced $P_5$.
         Let  $y_l \neq y_i,y_j$ with $1\leq l \leq 3$. Then $y_ly_i' \notin E(G)$, otherwise $\{y_i,y_i',y_l,v_2,v_1\}$ is a dart. By symmetry, $y_ly_j' \notin E(G)$. Since $y_i$ and $y_l$ are not comparable, there must exist $y_l' \in {N(y_l)}\setminus{N(y_i)}$. Then $y_l' \in S_2(4)$,
         $y_jy_l' \notin E(G)$ and $y_ly_j' \notin E(G)$. Then $y_l'y_j' \in E(G)$, otherwise $\{y_l ,y_l',v_3,y_j',y_j\}$ is an induced $P_5$. Then $\{y_j,y_j',y_l',v_5,v_1\}$ is a dart, a contradiction.

         Similarly, we can show that the number of $K_2$-components $,\dots,$ $K_{k-2}$-components is not more than $2^{k-2}+1$, respectively.
       \end{proof}

          Therefore, $|G|$ is bounded if $S_2 \neq \emptyset$. This completes the proof of \autoref{lem:6-vertex:c5}.
          \end{proof}

       \subsection{$(2t+1)$-antihole}

       \begin{lemma}\label{lem:6-vertex:c7}
        If $G$ contains an induced $\overline{C_{2t+1}}$ for $3 \le t \le k-2$, then $G$ has finite order.
       \end{lemma}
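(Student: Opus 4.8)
The plan is to run, around the antihole $H \cong \overline{C_{2t+1}}$, the same two-layer argument used for the $5$-hole in \autoref{lem:6-vertex:c5}. Write $V(H)=\{u_1,\dots,u_{2t+1}\}$ with indices modulo $2t+1$ and $u_iu_j\in E(G)$ iff $i-j\not\equiv\pm 1$, and partition $V(G)\setminus V(H)$ into the classes $S(X)=\{v:N_{V(H)}(v)=X\}$ for $X\subseteq V(H)$. There are at most $2^{2t+1}\le 2^{2k-3}$ such classes, a constant depending only on $k$, so it suffices to bound $|S(X)|$ for every nonempty $X$; then $|V(G)|=|V(H)|+\sum_X|S(X)|$ is bounded. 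We may moreover assume that $G$ is $C_5$-free, since otherwise \autoref{lem:6-vertex:c5} already bounds $|V(G)|$; this lets us use, besides $P_5$-freeness and $dart$-freeness, the absence of an induced $C_5$ when analysing attachments to $H$. Finally $\chi(H)=t+1\le k-1$, and more generally $\chi(G[S(X)])\le k-1$ for every $X$, because any induced subgraph of the $k$-vertex-critical graph $G$ with chromatic number $k$ must equal $G$.

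The first layer is a structural reduction showing that only boundedly many classes $S(X)$ are nonempty and that each nonempty class is ``tame''. Since $G$ is $dart$-free and $H$ is dense -- each $C$-consecutive pair $u_i,u_{i+1}$ is a non-edge of $H$ lying in a diamond of $H$, and every edge of $H$ lies in a triangle of $H$ -- exhibiting a $dart$ (or, via $P_5$-freeness, an induced $P_5$) on $\{v\}$ together with four vertices of $H$ should force $N_{V(H)}(v)$ to be empty, all of $V(H)$, or one of a bounded number of ``small'' patterns, and in the mixed case should produce a vertex of $V(H)$ complete to the whole class $S(X)$. Exactly as a $P_3$ inside $S_0$ or inside $S_2(i)$ was excluded in the $5$-hole analysis, a $P_3$ inside a class $S(X)$ together with a suitable vertex of $H$ yields a $dart$ or a $P_5$, so each $G[S(X)]$ is a disjoint union of cliques, each of order at most $k-1$ because $G$ is $K_k$-free.

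The second layer bounds the number of cliques of each order inside every class, and for this the three mechanisms already in the paper suffice. If $S(X)$ is a homogeneous set then, since $\chi(G[S(X)])\le k-1$, each of its components is $m$-vertex-critical for some $m\le k-1$ by \autoref{lem:homoge}; by the induction hypothesis there are only finitely many isomorphism types among these components, and the pigeonhole principle together with \autoref{lem:XY} bounds how many copies of each type occur. If some $u_i\in V(H)$ is complete to $S(X)$ and a non-adjacent pair in $S(X)$ together dominates $N_H(u_i)$, then $|S(X)|$ is bounded by \autoref{ST} (applied with $T=S(X)$, since $\chi(G[S(X)])\le k-1$). In any remaining class one counts $K_m$-components directly: as in \autoref{6-vertex:s2:k1k2k3k4}, a bounded number of them already contains either two components with the same neighbourhood (contradicting \autoref{lem:XY}) or a configuration from which an induced $P_5$ or $dart$ is extracted. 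Summing over the constantly many classes bounds $|V(G)|$, which finishes the proof.

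The step I expect to be hardest is the structural reduction of the second paragraph: pinning down exactly which neighbourhood patterns $X\subseteq V(H)$ can occur, and performing the diamond/triangle bookkeeping inside $\overline{C_{2t+1}}$ uniformly for all $t$ with $3\le t\le k-2$ rather than antihole by antihole -- in particular handling the classes that are neither homogeneous nor dominated by a single vertex of $H$, where the direct $K_m$-component count has to be arranged with some care.
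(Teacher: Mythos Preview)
Your proposal is a strategy outline rather than a proof, and you explicitly flag the structural reduction as the part you have not carried out. That reduction is precisely where the paper's argument is much cleaner than you anticipate, and where your plan, as written, has a gap.

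The paper does not need the $C_5$-free assumption, nor the $P_3$-free/union-of-cliques analysis, nor any component-counting in the style of \autoref{6-vertex:s2:k1k2k3k4}. Instead it proves two sharp structural facts that do almost all the work:
\begin{itemize}
\item $S_m=\emptyset$ for every $0\le m\le t$. For $1\le m\le t$ this is a short case analysis: a vertex with at most $t$ neighbours on $H$ misses some pair $v_{i-t},v_{i+t}$, and then either being mixed on, complete to, or anticomplete to the ``far'' arc $\{v_{i-t+2},\dots,v_{i+t-2}\}$ produces a dart or a $P_5$. The case $m=0$ then follows from connectivity. So the ``small'' attachment patterns you worry about simply do not occur, and there is no $S_0$ to bound.
\item For every $X$ with $t+1\le |X|\le 2t$, the class $S(X)$ is a \emph{clique}, not merely $P_3$-free: pick $v_i\in X$, $v_{i+1}\notin X$, and any far $v_j\in X$; two nonadjacent vertices of $S(X)$ together with $v_i,v_j,v_{i+1}$ give a dart. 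Hence $|S(X)|\le k-3$ outright.
\end{itemize}
After these two claims only $S_{2t+1}$ remains, and it is handled exactly as $S_5$ was in \autoref{lem:6-vertex:c5}: either it is homogeneous (then \autoref{lem:homoge}, the inductive hypothesis, and \autoref{lem:XY} finish), or \autoref{ST} applies with $S=S_{t+1}\cup\dots\cup S_{2t}$, $T=S_{2t+1}$, and $w$ any vertex of $H$.

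The concrete gap in your plan is therefore twofold. First, you leave the attachment analysis as a ``should''; without it you cannot even begin the second layer, and in particular your proposed treatment of $S_0$ (via \autoref{lem:homoge} and component counting) never gets off the ground because you have not determined what $N(S_0)$ looks like. Second, you over-engineer the intermediate classes: you aim to show each $G[S(X)]$ is a disjoint union of cliques and then count components, whereas in fact each such $S(X)$ is already a single clique of bounded size. Identifying the two facts above collapses your ``hardest step'' to a few lines.
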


       \begin{proof}
       Let $C = v_1,v_2,\ldots,v_{2t+1}$ be an induced $\overline{C_{2t+1}}$ such that $v_iv_j \in E(G)$ if and only if $|i - j| > 1$.
       All indices are modulo $2t+1$.  We partition $V(G)$ with respect to $C$.
       In the following, we shall write $S_2(v_i,v_{i+1})$ for $S_2(\{v_i,v_{i+1}\})$, $S_3(v_i,v_{i+1},v_{i+3})$ for $S_3(\{v_i,v_{i+1},v_{i+3}\})$, etc.

       \begin{claim}\label{6-cri:s123}
       $S_m = \emptyset$ for $1 \leq m \leq t$.
       \end{claim}
       \begin{proof}
     Let $u \in S_m$ for some  $1 \le m \le t$. Since $u$ has at most $t$ neighbors on $C$,
       there exists an index $i$ such that $u$ is adjacent to neither $v_{i-t}$ nor $v_{i+t}$.

       If $u$ is mixed on $\{v_{i-t+2},v_{i-t+3},\ldots,v_i,v_{i+1},\ldots,v_{i+t-2}\}$.
       Then there exist $j,s \in \{i-t+2,i-t+3,\ldots,i,i+1,\ldots,i+t-2\}$
       with $|j-s|\ge 2$ such that $u$ distinguishes $v_j$ and $v_s$. Then $\{v_{i-t},v_{i+t},v_j,v_s,u\}$ induces a dart.

         If $u$ is complete to $\{v_{i-t+2},v_{i-t+3},\ldots,v_i,v_{i+1},\ldots,v_{i+t-2}\}$, then $u$ is anticomplete to $\{v_{i-t+1},v_{i+t-1}\}$.
        For otherwise $u$ would be complete to $\{v_{i-t+1},v_{i-t+2},\ldots,v_{i+t-1}\}$
       which contradicts the assumption that $u$ has at most $t$ neighbors  on $C$. Then $\{v_{i-t+1},v_{i+t},v_{i+t-1},v_i,u\}$ induces a dart.

       Now assume that $u$ is anticomplete  to
       $V(C) \setminus \{v_{i-t+1},v_{i+t-1}\}$.
       By symmetry, we assume that $u$ is adjacent to $v_{i-t+1}$. Then
       $\{u,v_{i-t+1},v_{i+t},v_{i-t+2},v_{i-t}\}$
       is an induced $P_5$.

       Therefore, $S_m = \emptyset$, for $1 \le m \le t$.
\end{proof}

       \begin{claim}\label{6-cri:s0}
       $S_0 = \emptyset$.
       \end{claim}

       \begin{proof}
       Suppose not. Let $u \in S_0$. By \autoref{6-cri:s123} and the connectivity of $G$, $u$ has a neighbor $u' \in S_i$
       for some $t+1 \le i \le 2t+1$. Then there exists an index $i$ such that $u'$ is adjacent to $v_{i-t}$, $v_{i+t}$ and some vertex
       $v_j$ where $j \in \{i-t+2,\ldots,i,i+1,\ldots,i+t-2\}$. Now $\{v_{i-t},v_{i+t},v_j,u',u\}$ induces a dart. This proves $S_0 = \emptyset$.
       \end{proof}

       \begin{claim}\label{6-cri:s456}
       For every $X \subseteq C$ with $t+1 \le |X| \le 2t$, $S(X)$ is a clique.
       \end{claim}

       \begin{proof}
       Let $X \subseteq C$ be an arbitrary set with $t+1 \le |X| \le 2t$. Since $t+1 \le |X| \le 2t$, there exists an index $i \in C$
       such that $v_i \in X$ but $v_{i+1} \notin X$, and $v_j \in X$ where $v_j \in V(C)\setminus \{v_{i-2},v_{i-1},\ldots,v_{i+2}\}$. If $S(X)$ contains two non-adjacent
       vertices $u$ and $u'$, then $\{u,u',v_i,v_j,v_{i+1}\}$ induces a dart. Therefore, $S(X)$ is a clique.
       \end{proof}

       Since $G$ is $K_k$-free, by \autoref{6-cri:s456}, it follows that $|S(X)| \le k-3$ for $X \subseteq C$
with $t+1\le |X| \le 2t$. Therefore, $|S_{t+1} \cup \ldots \cup S_{2t}| \le (k-3)(\tbinom{2t+1}{t+1} + \dots +\tbinom{2t+1}{2t})$.
In the following, we  bound $S_{2t+1}$.

       \begin{claim}\label{s2t+1}
          Let $u,u' \in S_{2t+1}$ with $uu' \notin E(G)$.
          Then every vertex in $S_{t+1} \cup \ldots \cup S_{2t}$ is adjacent to at least one of $u$ and $u'$.
          \end{claim}

          \begin{proof}
          Let $v \in S_m$ with $t+1 \le m \le 2t$. Then there exist two
          vertices $v_i, v_j \in V(C)$ with $|i-j| \ge 2$ such that $v$  distinguishes $v_i$ and $v_j$.
          If $v$ is adjacent to neither $u$ nor $u'$, then $\{u,u',v_i,v_j,v\}$ induces
          a dart.
          \end{proof}

          \begin{claim}\label{k-ver:s2t+1}
       $S_{2t+1}$ is bounded.
       \end{claim}

       \begin{proof}

       Let $N_0= S_{t+1} \cup \ldots \cup S_{2t}$, $N_i = \{v|v \in S_{2t+1}\setminus {\cup_{j=0}^{i-1}N_j}$, $v $ has a nonneighbor in $N_{i-1}\}$
       where $i \ge 1$.

        If $\overline{G[S_{t+1} \cup \ldots \cup S_{2t+1}]}$ is connected. Since $|S_{t+1} \cup \ldots \cup S_{2t}| \le (k-3)(\tbinom{2t+1}{t+1} + \dots +\tbinom{2t+1}{2t})$ and there exists $w \in V(C)$ such that $w$ is complete to $S_{t+1} \cup \ldots \cup S_{2t} \cup S_{2t+1}$.
       By \autoref{k-ver:s2t+1}, every vertex in $S_{t+1} \cup \ldots \cup S_{2t}$ is adjacent to at least one of $w_1$ and $w_2$ where $w_1,w_2 \in S_{2t+1}$ and $w_1w_2 \notin E(G)$. Then $|S_{2t+1}|$ is a function of $\chi(S_{2t+1})$ by \autoref{ST}. Since $\chi(S_{2t+1}) \le k-t-1 $,  it follows that $S_{2t+1}$ is bounded.

       If $\overline{G[S_{t+1} \cup \ldots \cup S_{2t+1}]}$ is not connected. Then there exists an integer $j \ge 0$ such that $N_1,N_2,\ldots,N_j \neq \emptyset$ but $N_{j+1} = \emptyset$. Then $S_{2t+1} - \cup_{i=0}^{j}N_i$ is complete to $\cup_{i=0}^{j}N_i$, and so $S_{2t+1} - \cup_{i=0}^{j}N_i$
       is a homogeneous set.
       Since $\chi(S_{2t+1} - \cup_{i=0}^{j}N_i) \le k-t-1$, each component of $S_{2t+1} - \cup_{i=0}^{j}N_i$ is an $m$-vertex-critical graph with $1 \le m \le k-4$ by \autoref{lem:homoge}.
       By the inductive hypothesis, it follows  that there are finitely many $m$-vertex-critical$(P_5,dart)$-free graph for $1 \le m \le k-4$. By the pigeonhole principle, the number of each kind of graph is not more than $2^{|S_{t+1} \cup \ldots \cup S_{2t}|}$. So, $S_{2t+1} - \cup_{i=0}^{j}N_i$ is bounded. For each $N_i$ with $1 \le i \le j$, $N_i$ is bounded by \autoref{ST}. Therefore, $S_{2t+1}$ is bounded.
\end{proof}

This completes the proof of \autoref{lem:6-vertex:c7}.
 \end{proof}

By \autoref{lem:6-vertex:c5}--\autoref{lem:6-vertex:c7}, it follows that \autoref{th:6-vertex-critical} holds.
\end{proof}

\section{Complete characterization for $k \in \{5,6,7\}$}\label{algorithmSection}

In Section~\ref{6-vertex-critical}, we proved that there are finitely many $k$-vertex-critical $(P_5,dart)$-free graphs by showing the existence of an upper bound for the order of such graphs for every integer $k \geq 1$. These bounds are not necessarily sharp. In the current section, we show sharp upper bounds for $k \in \{5,6,7\}$ by computationally determining an exhaustive list of all $k$-vertex-critical $(P_5,dart)$-free graphs.

We created two independent implementations of the algorithm from Goedgebeur and Schaudt~\cite{GS18}, which we extended for the $(P_5,dart)$-free case. The source code of these implementations can be downloaded from~\cite{criticalpfree-site} and~\cite{jorik-github}. The algorithm expects three parameters as an input: an integer $k \ge 1$, a set of graphs $\mathcal{H}$ and a graph $I$. It generates all $k$-vertex-critical $\mathcal{H}$-free graphs that contain $I$ as an induced subgraph. The pseudocode is given in \autoref{algo:extend}. The algorithm is not guaranteed to terminate (e.g.\ there could be infinitely many such graphs). However, if the algorithm terminates, it is guaranteed that the generated graphs are exhaustive. The algorithm works by recursively extending a graph with one vertex and adding edges between this new vertex and already existing vertices. In each step of the recursion, the algorithm uses powerful pruning rules that restrict the ways in which the edges are added. These pruning rules allow the algorithm to terminate some branches of the recursion and are the reason why the algorithm itself can terminate in some cases. For example, a graph which is not $(k-1)$-colorable cannot appear as a proper induced subgraph of a $k$-vertex-critical graph, so the algorithm does not need to extend such graphs. In general, the pruning rules that are used by the algorithm are much more sophisticated than this. We refer the interested reader to~\cite{GS18} for further details about the correctness of the algorithm and the different pruning rules.

\begin{algorithm}[ht!]
\caption{Extend(An integer $k$, A set of graphs $\mathcal{H}$, A graph $I$)}
\label{algo:extend}
  \begin{algorithmic}[1]
		\IF{$I$ is $\mathcal{H}$-free AND not generated before}
			\IF{$I$ is not $(k-1)$-colorable}
				\IF{$I$ is a $k$-vertex-critical graph}
					\STATE output $I$
				\ENDIF	
			\ELSE
				\FOR{every graph $I'$ obtained by adding a new vertex $u$ to $I$ and edges between $u$ and vertices in $V(I)$ in all possible ways that are permitted by the pruning rules}
						\STATE Extend($k$,$\mathcal{H}$,$I'$)
				\ENDFOR					
			\ENDIF	
		\ENDIF	
  \end{algorithmic}
\end{algorithm}

We now prove the following characterization theorem:

\begin{theorem}\label{th:enumeration}
   There are exactly 184 5-vertex-critical $(P_5,dart)$-free graphs and the largest such graphs have order 13. There are exactly 18,029 6-vertex-critical $(P_5,dart)$-free graphs and the largest such graphs have order 16. There are exactly 6,367,701 7-vertex-critical $(P_5,dart)$-free graphs and the largest such graphs have order 19.
\end{theorem}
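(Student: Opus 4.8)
The plan is to prove \autoref{th:enumeration} by an exhaustive computer search built on the Extend procedure of \autoref{algo:extend}, seeded by a provably complete list of small ``unavoidable'' induced subgraphs. First I would show that, for each $k\ge 5$, every $k$-vertex-critical $(P_5,dart)$-free graph $G$ contains, as an induced subgraph, a member of the finite set
\[
\mathcal{S}_k=\{K_k,\ \overline{C_{2k-1}}\}\cup\{\,\overline{C_{2t+1}}\ :\ 2\le t\le k-2\,\},
\]
where $\overline{C_5}=C_5$. Indeed, if $G$ is perfect then $\omega(G)=\chi(G)=k$, so $K_k\subseteq G$, and vertex-criticality forces $G=K_k$. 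If $G$ is imperfect, then by \autoref{thm:SPGT} it contains an odd hole or an odd antihole; since $G$ is $P_5$-free and every $C_{2t+1}$ with $t\ge 3$ contains an induced $P_5$, the only odd hole that can occur is $C_5$. An odd antihole is some $\overline{C_{2t+1}}$ with $t\ge 2$, and since $\chi(\overline{C_{2t+1}})=t+1$, the range $t\ge k$ is impossible; moreover, if $t=k-1$ the induced subgraph $\overline{C_{2k-1}}$ already has chromatic number $k=\chi(G)$, so vertex-criticality forces $G=\overline{C_{2k-1}}$. The remaining cases all satisfy $2\le t\le k-2$, which proves the claim.

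Next I would run \textsc{Extend}($k$, $\{P_5,dart\}$, $I$) for every $I\in\mathcal{S}_k$. By the correctness of the algorithm~\cite{GS18}, whenever such a run terminates its output is exactly the set of $k$-vertex-critical $(P_5,dart)$-free graphs that contain $I$ as an induced subgraph (if the seed is itself not $(P_5,dart)$-free, the run contributes nothing). By the previous paragraph, the union of these outputs over $I\in\mathcal{S}_k$, with isomorphic duplicates removed, is precisely the set of all $k$-vertex-critical $(P_5,dart)$-free graphs, and each graph in it is certified by the algorithm to be $k$-vertex-critical and $(P_5,dart)$-free. To guard against implementation errors I would carry out the entire computation with two independently written programs and check that they return identical lists; the claimed counts ($184$, $18{,}029$, $6{,}367{,}701$) and the claimed maximum orders ($13$, $16$, $19$) for $k=5,6,7$ are then read off from these lists.

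The delicate point is termination and feasibility: \autoref{algo:extend} need not halt in general. Here, however, \autoref{th:6-vertex-critical} guarantees that there are only finitely many $k$-vertex-critical $(P_5,dart)$-free graphs, so the recursion only ever needs to extend induced subgraphs of graphs of bounded order; together with the basic pruning that a graph which is not $(k-1)$-colorable cannot be a proper induced subgraph of a $k$-vertex-critical graph, this confines the relevant part of the search tree to a finite set, and the stronger pruning rules of~\cite{GS18} make the runs finish in reasonable time in practice. The case $k=7$ is by far the heaviest (more than six million output graphs), so the efficiency of the implementation and the independent cross-check between the two programs are the real crux, whereas $k\le 6$ is comparatively light. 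Finally, I would note that the order bounds extractable from Section~\ref{6-vertex-critical} are far from tight, while \autoref{th:enumeration} determines the exact maximum orders.
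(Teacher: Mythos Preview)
Your proposal is correct and follows essentially the same approach as the paper: seed \autoref{algo:extend} with the unavoidable induced subgraphs $K_k$, $\overline{C_{2k-1}}$, and $\overline{C_{2t+1}}$ for $2\le t\le k-2$ established in Section~\ref{6-vertex-critical}, run two independent implementations, and read off the counts and maximum orders. The only minor difference is that the paper treats termination purely empirically (reporting actual running times) rather than attempting the a priori finiteness heuristic you sketch; note that finiteness of the target set does not by itself bound the search tree, so your argument ultimately rests, as you acknowledge, on the pruning rules of~\cite{GS18} working in practice.
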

\begin{proof}
We saw in Section~\ref{6-vertex-critical} that every $k$-vertex-critical $(P_5,dart)$-free graph is either $K_k$, $\overline{C_{2k-1}}$ or contains $C_5$ as an induced subgraph or contains $\overline{C_{2t+1}}$ as an induced subgraph for some $2 \le t \le k-2$. If \autoref{algo:extend} is called with the parameters $k \in \{5,6,7\}$, $\mathcal{H}=\{P_5,dart\}$ and $I$ one of these graphs, the algorithm terminates in less than a second for $k=5$, less than a minute for $k=6$ and a few hours for $k=7$ (for all choices of $k$ and $I$). The counts of these graphs are reported in Table~\ref{table:counts}. The results of the two independent implementations of this algorithm (cf.\ \cite{criticalpfree-site} and~\cite{jorik-github}) are in complete agreement with each other.
\end{proof}

Table~\ref{table:counts} gives an overview of the number of $k$-critical and $k$-vertex-critical $(P_5,dart)$-free graphs for $k \in \{5,6,7\}$. A graph $G$ is $k$-critical $(P_5,dart)$-free if it is $k$-chromatic, $(P_5,dart)$-free and every $(P_5,dart)$-free proper subgraph of $G$ is $(k - 1)$-colorable.
The graphs from Table~\ref{table:counts} can be obtained from the meta-directory of the \textit{House of Graphs}~\cite{CDG} at \url{https://houseofgraphs.org/meta-directory/critical-h-free}. Moreover, the $k$-critical graphs from Table~\ref{table:counts} can also be inspected in the searchable database of the \textit{House of Graphs}~\cite{CDG} by searching for the keywords ``critical (P5,dart)-free''.
The 5-critical $(P_5,dart)$-free graphs are shown in \autoref{fig:5CritGraphs}.

\begin{table}[ht!]
    \centering
    \setlength{\tabcolsep}{4pt}
    \begin{tabular}{| l | *{7}{c} | c |}
        \hline
        Vertices           & 5& 6     & 7     & 8     & 9     & 10    & 11            & Total        \\ \hline
        5-critical         & 1 & & 1  & 1  & 7    & 1     &       &    \\
        5-vertex-critical  & 1 &  & 1   & 6   & 172     & 1 &       &    \\
        6-critical          & & 1 &   & 1  & 1    &  6    & 33      &   \\
        6-vertex-critical   & & 1  &  & 1   & 6     & 171      & 17,834       &    \\
        7-critical          &  &  & 1  &   & 1 & 1 & 6 &    \\
        7-vertex-critical   & &   & 1   &    & 1     & 6      & 171       &    \\ \hhline{|=|=======|=|}
        Vertices                   & 12    & 13    & 14    & 15    & 16    & 19           &  &            \\ \hline
        5-critical          & & 3 &   &   &     &      &       &   14  \\
        5-vertex-critical   & & 3  &   &    &      &       &       &   184 \\
        6-critical          & 2 & 1 &   &   &  13  &      &       &  58 \\
        6-vertex-critical   & 2 &  1 &   &   & 13     &      &       &  18,029  \\
        7-critical          & 28 & 250 & 6 & 2 & 1 & 35  & & 331 \\
        7-vertex-critical   & 17,834  & 6,349,644   & 6   & 2     & 1      & 35   &    & 6,367,701    \\
        \hline
    \end{tabular}
    \caption{The number of $k$-critical and $k$-vertex-critical $(P_5,dart)$-free graphs (for $k \in \{5,6,7\}$).}
    \label{table:counts}
\end{table}

\begin{figure}[h!t]
\centering
\includegraphics[width=.22\textwidth]{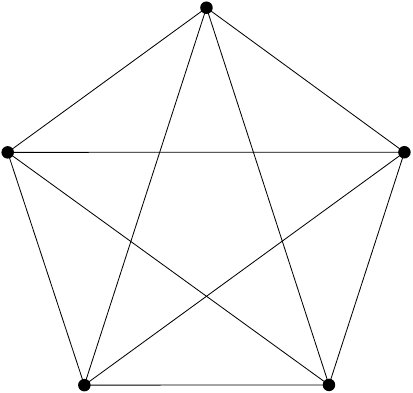}\ \
\includegraphics[width=.22\textwidth]{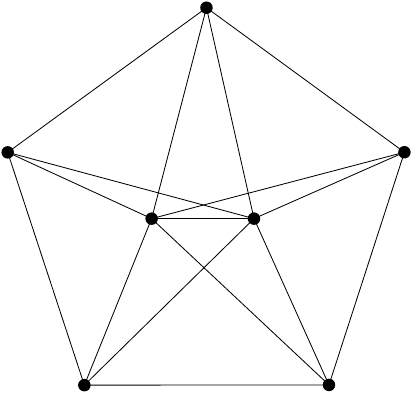} \ \
\includegraphics[width=.22\textwidth]{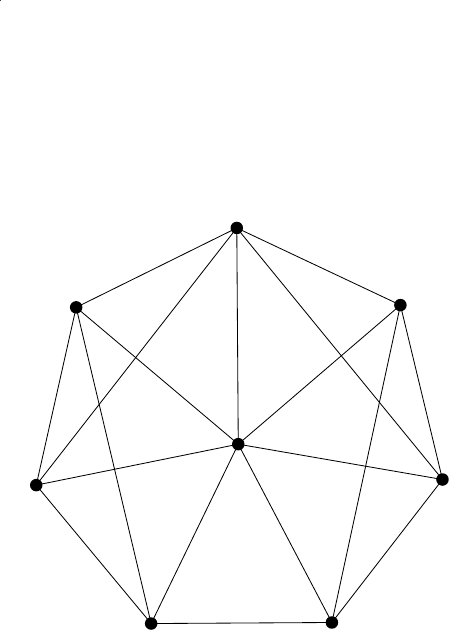}\ \

\smallskip

\includegraphics[width=.22\textwidth]{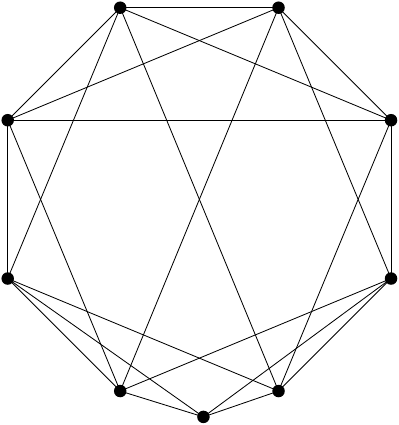} \ \
\includegraphics[width=.22\textwidth]{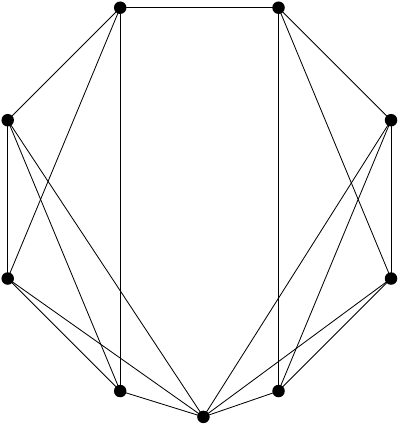}
\includegraphics[width=.22\textwidth]{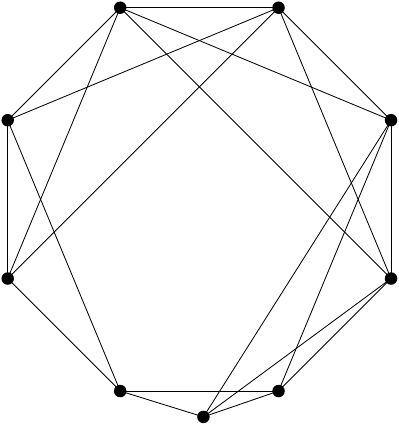}\ \
\includegraphics[width=.22\textwidth]{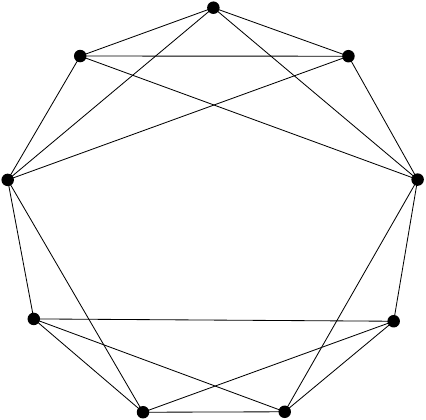} \ \

\smallskip

\includegraphics[width=.22\textwidth]{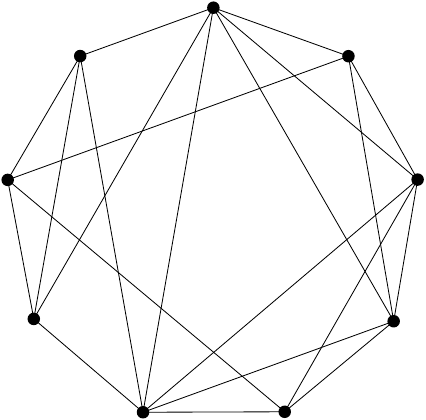}\ \
\includegraphics[width=.22\textwidth]{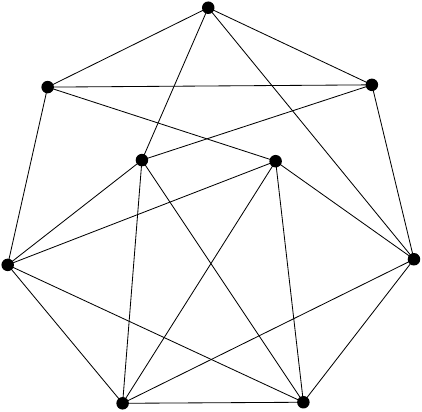} \ \
\includegraphics[width=.22\textwidth]{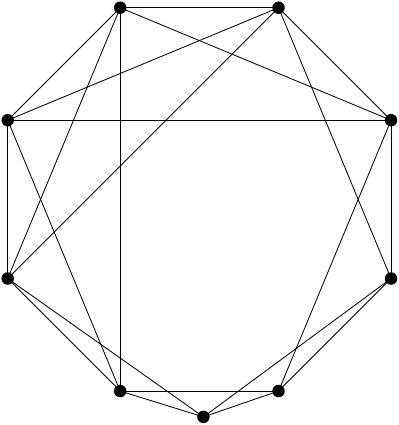}
\includegraphics[width=.22\textwidth]{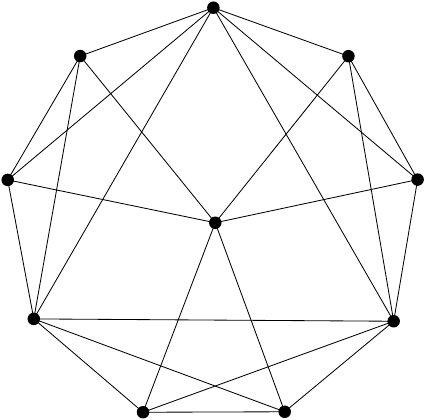}\ \

\smallskip

\includegraphics[width=.22\textwidth]{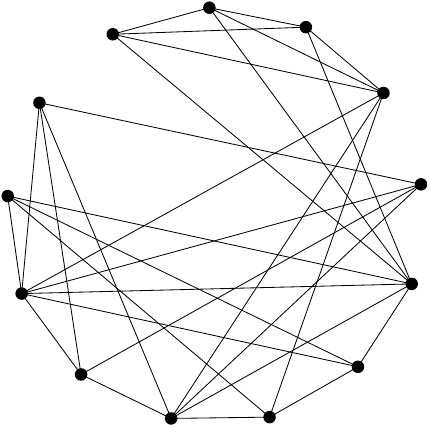} \ \
\includegraphics[width=.22\textwidth]{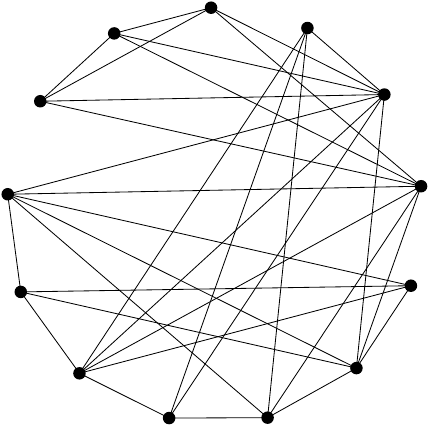}\ \
\includegraphics[width=.22\textwidth]{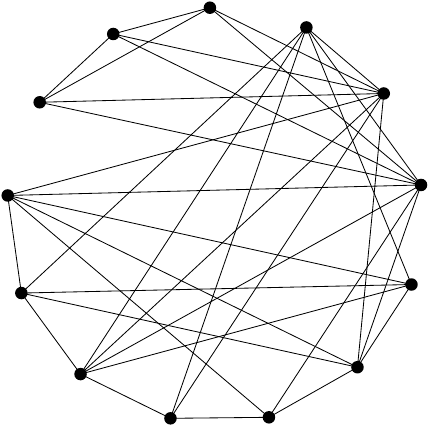} \ \

\caption{All 14 5-critical $(P_5,dart)$-free graphs.}
\label{fig:5CritGraphs}

\end{figure}

\section{Conclusion}\label{conclusion}
In this paper, we have proved that there are finitely many $k$-vertex-critical $(P_5,dart)$-free graphs for $k \ge 1$ and computationally determined an exhaustive list of such graphs for $k \in \{5,6,7\}$. Our results gave an affirmative
answer to the problem posed in~\cite{CGHS21} for $H =dart$.
In the future, it is natural to investigate the finiteness of the set of $k$-vertex-critical $(P_5,H)$-free graphs for other graphs $H$ of order 5, see \cite{CH23}.


\subsection*{Acknowledgements}

The research of Jan Goedgebeur was supported by Internal Funds of KU Leuven. Jorik Jooken is supported by a Postdoctoral Fellowship of the Research Foundation Flanders (FWO). Shenwei Huang is supported by National Natural Science Foundation of China (12171256).

\end{document}